
\documentclass[12pt,a4paper]{article}%
\usepackage[utf8]{inputenc}
\usepackage{hyperref}
\usepackage{amsmath}
\usepackage{amsfonts}
\usepackage{amssymb}
\usepackage{xcolor}
\usepackage{graphicx}%
\setcounter{MaxMatrixCols}{30}
\providecommand{\U}[1]{\protect\rule{.1in}{.1in}}
\newtheorem{theorem}{Theorem}

\newtheorem{lemma}[theorem]{Lemma}

\newtheorem{problem}[theorem]{Problem}
\newtheorem{proposition}[theorem]{Proposition}

\newenvironment{proof}[1][Proof]{\noindent\textbf{#1.} }{\ \hfill \rule{0.5em}{0.5em}\bigskip}
\graphicspath{{D:/Dropbox/Riste-Sedlar/MetricFaultTolerant/Slike/}}

\textwidth=16cm
\hoffset=-1.2cm
\voffset=-2.cm
\textheight=23cm
\begin{document}

\title{Fault tolerance of metric basis can be expensive}
\author{Martin Knor$^{1}$, Jelena Sedlar$^{2,4}$, Riste \v{S}krekovski$^{3,4}$\\{\small $^{1}$ \textit{Slovak University of Technology, Bratislava, Slovakia
}}\\[0.1cm] {\small $^{2}$ \textit{University of Split, FGAG, Split, Croatia }}\\[0.1cm] {\small $^{3}$ \textit{University of Ljubljana, FMF, Ljubljana,
Slovenia }}\\[0.1cm] {\small $^{4}$ \textit{Faculty of Information Studies, Novo Mesto,
Slovenia }}}
\maketitle

\begin{abstract}
A set of vertices $S$ is a resolving set of a graph $G,$ if for every pair of
vertices $x$ and $y$ in $G$, there exists a vertex $s$ in $S$ such that $x$
and $y$ differ in distance to $s$. A smallest resolving set of $G$ is called a
metric basis. The metric dimension $\mathrm{dim}(G)$ is the cardinality of a
metric basis of $G$. The notion of a metric basis is applied to the problem of
placing sensors in a network, where the problem of sensor faults can arise.
The fault-tolerant metric dimension $\mathrm{ftdim}(G)$ is the cardinality of
a smallest resolving set $S$ such that $S\setminus\{s\}$ remains a resolving
set of $G$ for every $s\in S$. A natural question is how much more sensors
need to be used to achieve a fault-tolerant metric basis. It is known in
literature that there exists an upper bound on $\mathrm{ftdim}(G)$ which is
exponential in terms of $\mathrm{dim}(G),$ i.e. $\mathrm{ftdim}(G)\leq
\mathrm{dim}(G)(1+2\cdot5^{\mathrm{dim}(G)-1}).$ In this paper, we construct
graphs $G$ with $\mathrm{ftdim}(G)=\mathrm{dim}(G)+2^{\mathrm{dim}(G)-1}$ for
any value of $\mathrm{dim}(G)$, so the exponential upper bound is necessary.
We also extend these results to the $k$-metric dimension which is a
generalization of the fault-tolerant metric dimension. First, we establish a
similar exponential upper bound on $\mathrm{dim}_{k+1}(G)$ in terms of
$\mathrm{dim}_{k}(G),$ and then we show that there exists a graph for which
$\mathrm{dim}_{k+1}(G)$ is indeed exponential. For a possible further work, we
leave the gap between the bounds to be reduced.

\end{abstract}

\textit{Keywords:} metric dimension; fault-tolerant metric dimension,
$k$-metric dimension.

\textit{AMS Subject Classification numbers:} 05C12

\section{Introduction}

The notion of metric dimension is a classical notion in graph theory as it was
introduced in the 1970's. It first appeared in \cite{Harary1976} and
\cite{Slater1975}, with the motivation for its introduction being the problem
of locating robots moving through a network by the set of sensors placed in
some of the nodes of the network. This notion received much attention in the
scientific community, and there are many scientific papers covering the notion
from a broad variety of aspects. Some of the recent papers dealing with the
notion of metric dimension are \cite{Geneson,Hakanen, Mashkaria,Sedlar,Wu},
but this list is not nearly exhaustive. Many variants of the metric dimension
have also been introduced and extensively studied, such as the edge metric
dimension \cite{TratnikEdge}, the mixed metric dimension \cite{Kelm}, the
fault-tolerant metric dimension \cite{bibFtUpperBound, Mora2022, Vietz2019},
the $k$-metric dimension \cite{Corregidor, Estrada-Moreno2013, Klavzar}, the
weak $k$-metric dimension \cite{WeakDim} and so on. For a comprehensive survey
on the topic of metric dimension and its variants, one can see the two recent
surveys \cite{Kuziak, Tillquist}.

Throughout the paper, we assume that a graph $G$ is simple and connected,
unless explicitly stated otherwise. For a pair of vertices $x$ and $y$ in $G$,
their \emph{distance} $d_{G}(x,y)$ is defined as the length of a shortest path
connecting $x$ and $y$ in $G$. The \emph{distance} $d_{G}(x,S)$ between a
vertex $x$ and a set $S\subseteq V(G)$ is defined as $d_{G}(x,S)=\min
\{d_{G}(x,s):s\in S\}$.

A vertex $s\in V(G)$ \emph{distinguishes} a pair of vertices $x$ and $y\in
V(G)$ if $d_{G}(s,x)\not =d_{G}(s,y)$. If a vertex $s$ distinguishes $x$ and
$y$, it is sometimes said that it \emph{resolves} or \emph{locates} $x$ and
$y$. A set of vertices $S\subseteq V(G)$ is a \emph{resolving} set of $G$ if
for every pair of vertices $x$ and $y\in V(G)$, there exists a vertex $s\in S$
that distinguishes $x$ and $y$. A resolving set of $G$ is sometimes also
called a \emph{locating} set of $G$. A smallest resolving set of $G$ is called
a \emph{metric basis} of $G$, and the cardinality of a metric basis is called
the \emph{metric dimension} of $G$, which is denoted by $\mathrm{dim}(G)$.

Since the notion of metric dimension arises in the context of the problem of
placing sensors in a network in the physical world, it is clear that a sensor
in the physical world can malfunction. If that happens, the remaining
functional sensors may no longer distinguish all the pairs of nodes in a
network. Therefore, it is natural to pose the question of the resilience of
the set of sensors, i.e. how to choose a set of sensors so that it is
resilient to a sensor failure. This gives rise to a variant of metric
dimension known as fault-tolerant metric dimension. Namely, a set $S\subseteq
V(G)$ is a \emph{fault-tolerant resolving set} of $G$ if $S\setminus\{s\}$ is
a resolving set of $G$ for every $s\in S$. The cardinality of a smallest
fault-tolerant resolving set of $G$ is called the \emph{fault-tolerant metric
dimension} of $G$, and it is denoted by $\mathrm{ftdim}(G)$.

Note that every pair of vertices $x$ and $y\in V(G)$ is resolved by both $x$
and $y$. Therefore, it follows that every graph $G$ with at least two vertices
has a fault-tolerant resolving set. Hence, the fault-tolerant metric dimension
is well defined for every graph $G$ with $\left\vert V(G)\right\vert \geq2$.
Some of the relevant papers on the fault-tolerant metric dimension include
\cite{bibFtUpperBound, Mora2022,Vietz2019}.

The notion of the fault-tolerant resolving set, which is resilient to the
failure of precisely one sensor, can be generalized to the notion of a
resolving set that is resilient to the failure of $k$ sensors, for any integer
$k\geq2$. Namely, a set $S\subseteq V(G)$ is a $k$\emph{-resolving set} of $G$
if $S\setminus S^{\prime}$ is a resolving set of $G$ for every set $S^{\prime
}\subseteq S$ with $\left\vert S^{\prime}\right\vert =k-1$. The cardinality of
a smallest $k$-resolving set of $G$ is called the $k$-metric dimension of $G$,
and it is denoted by $\mathrm{dim}_{k}(G)$. Note that the fault-tolerant
metric dimension of $G$ is equal to the $k$-metric dimension of $G$ for $k=2$,
i.e. $\mathrm{ftdim}(G)=\mathrm{dim}_{2}(G)$. Further, a graph $G$ may not
have a $k$-resolving set for $k>2$. However, if a graph $G$ has a
$k$-resolving set for $k>2$, then it has a $k^{\prime}$-resolving set for
every $k^{\prime}\in\{1,\ldots,k\}$. The largest value of $k$ for which $G$
has a $k$-resolving set is denoted by $\kappa(G)$, and such a graph $G$ is
said to be $\kappa(G)$\emph{-dimensional}.

In this paper, we focus our attention on assuring the resistance of a sensor
set to sensor failure, which is modeled through the notions of fault-tolerant
resolving set and, more generally, a $k$-resolving set. A natural question
that arises in this context is how much this resistance to failure "costs",
i.e., how many more sensors we need to include in the sensor set $S$ to
achieve its resistance to the failure of $k$ sensors. We show that there are
broad classes of graphs where, in order to assure that a resolving set $S$ is
fault-tolerant, it is necessary to introduce to it at most $\left\vert
S\right\vert $ additional vertices.

Hernando, Mora, Slater and Wood \cite{bibFtUpperBound} established that in
general at most $2\left\vert S\right\vert \cdot5^{\left\vert S\right\vert -1}$
vertices must be added to a resolving set $S$ in order for it to become
fault-tolerant, i.e. resistant to the failure of precisely one sensor. Note
that this upper bound is exponential in $\left\vert S\right\vert $, which
implies that assuring the resistance of $S$ can be very costly even for only
one sensor failure. This means that $\mathrm{ftdim}(G)$ could be exponential
in terms of $\mathrm{dim}(G)$ in the worst-case scenario. In this paper, we
demonstrate that this indeed holds, i.e. that there exists a graph $G$ for
which $\mathrm{ftdim}(G)$ is exponential in terms of $\mathrm{dim}(G)$.
Moreover, we establish similar results for $\mathrm{dim}_{k}(G)$ for higher
values of $k$. More precisely, we show that $\mathrm{dim}_{k+1}(G)$ is bounded
from above by a function that is exponential in terms of $\mathrm{dim}_{k}(G)$
and that this bound is tight in the sense that there exists a graph $G$ for
which $\mathrm{dim}_{k+1}(G)$ is indeed exponential in terms of $\mathrm{dim}%
_{k}(G)$.

\section{Fault-tolerant metric dimension}

Let us consider the fault-tolerant dimension and the question how much larger
it can be compared to the metric dimension. We first show that for some of the
usual families of graphs, such as trees and complete multipartite graphs, it
holds that $\mathrm{ftdim}(G)$ is bounded from above by a linear function in
terms of $\mathrm{dim}(G).$ In \cite{bibFtUpperBound}, the exact value of the
fault-tolerant metric dimension is established for trees, and the following
proposition is a consequence.

\begin{proposition}
\label{Prop_trees}If $T$ is a tree, then%
\[
\mathrm{dim}(T)+1\leq\mathrm{ftdim}(T)\leq2\mathrm{dim}(T).
\]
Moreover, let $n\geq4$ and $1\leq d\leq\left\lfloor n/3\right\rfloor $. Then
there exists a tree $T$ on $n$ vertices with $\mathrm{ftdim}(T)-\mathrm{dim}%
(T)=d$.
\end{proposition}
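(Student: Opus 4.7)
The plan is to invoke the exact formula for $\mathrm{ftdim}(T)$ for trees established in \cite{bibFtUpperBound}, combined with the classical formula $\mathrm{dim}(T)=L(T)-\mathrm{ex}(T)$ valid for every tree $T$ that is not a path, where $L(T)$ is the number of leaves and $\mathrm{ex}(T)$ is the number of exterior major vertices.

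The lower bound $\mathrm{dim}(T)+1\leq\mathrm{ftdim}(T)$ follows directly from the definition: a smallest fault-tolerant resolving set $S$ satisfies $|S\setminus\{s\}|\geq\mathrm{dim}(T)$ for every $s\in S$, so $|S|\geq\mathrm{dim}(T)+1$. For the upper bound $\mathrm{ftdim}(T)\leq 2\,\mathrm{dim}(T)$, I distinguish two cases. If $T$ is a path, then $\mathrm{dim}(T)=1$ and $\mathrm{ftdim}(T)=2$, giving the bound with equality. Otherwise, the exact formula from \cite{bibFtUpperBound} expresses $\mathrm{ftdim}(T)$ as $L(T)-p(T)$, where $p(T)$ counts the exterior major vertices of terminal degree one. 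Writing $L(T)=\sum_{u}\mathrm{ter}(u)$ with the sum over exterior major vertices and $\mathrm{ter}(u)$ denoting the terminal degree, one has the elementary estimate
\[
L(T)\;\geq\;p(T)\cdot 1+(\mathrm{ex}(T)-p(T))\cdot 2\;=\;2\,\mathrm{ex}(T)-p(T),
\]
which rearranges to $L(T)-p(T)\leq 2(L(T)-\mathrm{ex}(T))$, i.e.\ $\mathrm{ftdim}(T)\leq 2\,\mathrm{dim}(T)$.

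For the existence statement, given $n\geq 4$ and $1\leq d\leq\lfloor n/3\rfloor$, I construct $T$ as follows. Take a path $u_{1}u_{2}\cdots u_{d}$ and attach two pendant leaves at every $u_{i}$, then subdivide any edges of the resulting tree a total of $n-3d$ times to reach $n$ vertices. If $d\geq 2$, every $u_{i}$ is an exterior major vertex of terminal degree two and no other exterior major vertex is present, so $L(T)=2d$, $\mathrm{ex}(T)=d$, $p(T)=0$, and therefore $\mathrm{dim}(T)=d$ and $\mathrm{ftdim}(T)=2d$. For $d=1$, the construction degenerates to a path, for which $\mathrm{dim}(T)=1$ and $\mathrm{ftdim}(T)=2$. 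Subdivisions preserve the leaf set and all terminal degrees, hence preserve both $\mathrm{dim}(T)$ and $\mathrm{ftdim}(T)$. In all cases $\mathrm{ftdim}(T)-\mathrm{dim}(T)=d$ as required.

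The main obstacle is invoking the formula from \cite{bibFtUpperBound} carefully enough to track the contribution of exterior major vertices of terminal degree one: this quantity $p(T)$ is simultaneously what prevents $\mathrm{ftdim}(T)$ from being equal to $L(T)$ in general, and what makes the factor two in the upper bound sharp along the caterpillar family constructed above.
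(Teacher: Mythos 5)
Your proposal is correct and follows essentially the same route as the paper: both invoke Slater's formula $\mathrm{dim}(T)=L(T)-\mathrm{ex}(T)$ together with the formula $\mathrm{ftdim}(T)=L(T)-p(T)$ from \cite{bibFtUpperBound}, derive the upper bound from the same leaf-count $L(T)\geq 2\mathrm{ex}(T)-p(T)$, and realize the difference $d$ by the same caterpillar (the paper absorbs the surplus $n-3d$ vertices as extra leaves rather than by subdividing edges, which is an immaterial variation). Your explicit verification of the lower bound and of the $d=1$ degenerate case is slightly more careful than the paper's, but the substance is identical.
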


\begin{proof}
If $T$ is a path then $\mathrm{dim}(T)=1$ and $\mathrm{ftdim}(T)=2$, so the
inequalities obviously hold. Let $T$ be a tree different from a path, and let
$a$ denote the number of leaves of $T$. Further, let $b$ (resp. $c$) be the
number of vertices that have degree greater than two and that are connected by
paths of degree two interior vertices to one or more (resp. to exactly one)
leaves. In \cite{Slater1975} it is established that $\mathrm{dim}(T)=a-b$ and
in \cite{bibFtUpperBound} that\ $\mathrm{ftdim}(T)=a-c.$

Now, observe that each of the $b-c$ branching vertices determines at least two
rays (paths with internal vertices of degree two terminating in a leaf), and
so%
\[
2(b-c)+c\leq a,
\]
which gives%
\[
\mathrm{ftdim}(T)=a-c\leq2(a-b)=2\mathrm{dim}(T).
\]
Further, consider a path on $d$ vertices. Attach to each vertex of the path at
least two vertices, and denote by $T$ the resulting tree on $n$ vertices.
Since $a=n-d$, $b=d$ and $c=0$, we get $\mathrm{ftdim}(T)=n-d$ and
$\mathrm{dim}(T)=n-2d$.
\end{proof}

Next, we show that the same linear upper bound holds for complete multipartite graphs.

\begin{proposition}
\label{Prop_multipartite}If $G$ is a complete multipartite graph, then%
\[
\mathrm{dim}(G)+1\leq\mathrm{ftdim}(G)\leq2\mathrm{dim}(G).
\]
Moreover, let $n\geq2$ and let $1\leq d\leq\left\lfloor n/2\right\rfloor $.
Then there exists a complete multipartite graph $G$ on $n$ vertices with
$\mathrm{ftdim}(G)-\mathrm{dim}(G)=d$.
\end{proposition}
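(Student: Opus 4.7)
The plan is to exploit the extremely simple distance structure of a complete multipartite graph $G$ with parts $V_1,\ldots,V_t$. All distances in $G$ equal $1$ or $2$, and $d_G(u,v)=2$ precisely when $u\neq v$ and $u,v$ share a part. Consequently a vertex $s$ distinguishes a pair $\{x,y\}$ if and only if $s\in\{x,y\}$ or $s$ lies in the part of exactly one of $x,y$. Two local rules fall out: a pair inside a common part is distinguished only by the two vertices of the pair, and a pair $\{x,y\}$ with $x\in V_i$, $y\in V_j$, $i\neq j$, is distinguished precisely by the vertices of $V_i\cup V_j$.

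Let $t_1$ denote the number of singleton parts and $t_2$ the number of parts of size at least $2$. The first rule forces any resolving set to contain at least $|V_i|-1$ vertices from every non-singleton part, while the second rule forces it to miss at most one part entirely; I would verify these conditions are also sufficient, yielding
\[
\mathrm{dim}(G)=\sum_{|V_i|\geq 2}(|V_i|-1)+\max\{0,\,t_1-1\}.
\]
The analogous fault-tolerant reasoning demands that every non-singleton part lie in $S$ \emph{entirely} and, when $t_1\geq 2$, that every singleton part lie in $S$ as well, so $\mathrm{ftdim}(G)=\sum_{|V_i|\geq 2}|V_i|+t_1$ when $t_1\geq 2$ and $\mathrm{ftdim}(G)=\sum_{|V_i|\geq 2}|V_i|$ otherwise. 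Subtracting, $\mathrm{ftdim}(G)-\mathrm{dim}(G)\in\{t_2,\,t_2+1\}$. The lower bound $\mathrm{dim}(G)+1\leq\mathrm{ftdim}(G)$ holds for \emph{any} graph, since a fault-tolerant basis of size $\mathrm{dim}(G)$ would, upon removing any vertex, produce a resolving set of size $\mathrm{dim}(G)-1$. For the upper bound, note that $\mathrm{dim}(G)\geq t_2+\max\{0,\,t_1-1\}$ because each non-singleton part contributes at least $1$; a short case split on whether $t_1\leq 1$ or $t_1\geq 2$ then gives $\mathrm{ftdim}(G)\leq 2\mathrm{dim}(G)$.

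For the extremal claim I would exhibit two constructions. When $d=1$, take $G=K_n$, so $t_1=n$, $t_2=0$, and the formulas give $\mathrm{dim}(G)=n-1$ and $\mathrm{ftdim}(G)=n$. When $2\leq d\leq\lfloor n/2\rfloor$, take $G=K_{n-2(d-1),\,2,\,\ldots,\,2}$ with $d-1$ parts of size $2$ and one part of size $n-2(d-1)$; the feasibility condition $n-2(d-1)\geq 2$ is exactly $d\leq n/2$, and the formulas give $\mathrm{dim}(G)=n-d$ and $\mathrm{ftdim}(G)=n$, as required. The only delicate aspect of the whole argument is the bookkeeping around singleton parts, which introduces the conditional term in $\mathrm{ftdim}$ and forces the small value $d=1$ to be handled by a separate construction.
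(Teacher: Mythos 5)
Your proposal is correct and follows essentially the same route as the paper: both arguments compute the exact values of $\mathrm{dim}(G)$ and $\mathrm{ftdim}(G)$ for a complete multipartite graph in terms of the number of singleton and non-singleton parts (your formulas are equivalent to the paper's $n-p$ / $n-p+q-1$ and $n$ / $n-1$), then read off the inequalities and realize the gap $d$ with a multipartite graph whose parts all have size at least $2$. Your separate treatment of $d=1$ via $K_n$ is a sensible refinement, since a single part of size $n\geq 2$ would give a disconnected graph, a degenerate case the paper's uniform construction glosses over.
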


\begin{proof}
Let $G$ be a complete multipartite graph on $n$ vertices with $p$ partite
sets, out of which $q$ consist of single vertices. We will first show that%
\[
\mathrm{dim}(G)=\left\{
\begin{array}
[c]{ll}%
n-p & \text{if }q=0,\\
n-p+q-1 & \text{otherwise.}%
\end{array}
\right.
\]
To see this, let $x$ and $y$ be two distinct vertices from one partite set.
Then no vertices other than $x$ and $y$ distinguish them. Hence, every
resolving set $S$ can miss at most one vertex from each partite set. For
$q=0,$ it is easily verified that a set of vertices $S$ which misses precisely
one vertex from each partite set is a resolving set of $G,$ which implies
$\mathrm{dim}(G)=n-p.$ Otherwise, for $q\not =0,$ let $x$ and $y$ are two
vertices from distinct partite sets, both of which consist of a single vertex.
Notice that $x$ and $y$ are resolved only by themselves. This implies that a
resolving set $S$ can miss only one vertex from single vertex partite sets. It
is easily verified that a set $S\subseteq V(G)$ which misses precisely one
vertex from each partite set with at least two vertices and a vertex from only
one single vertex partite set is a resolving set of $G,$ from which we
conclude $\mathrm{dim}(G)=n-p+q-1.$

Let us next show that
\[
\mathrm{ftdim}(G)=\left\{
\begin{array}
[c]{ll}%
n-1 & \text{if }q=1,\\
n & \text{otherwise.}%
\end{array}
\right.
\]
In order to do that, let $S\subseteq V(G)$ be a set of vertices distinct from
$V(G),$ so there exists a vertex $x\in V(G)\backslash S.$ If $x$ belongs to a
partite set with at least two elements, then there exists a vertex $y$ from
the same partite set as $x.$ In this case the set $S\backslash\{y\}$ does not
resolve $G,$ so $S$ is not a fault-tolerant resolving set of $G.$ We conclude
that a fault-tolerant resolving set must contain all vertices from partite
sets with at least two elements. Hence, for $q=0,$ we have $\mathrm{ftdim}%
(G)=n.$ Next, for $q=1,$ let $x$ be the only vertex contained in a single
vertex partite set. It is easily verified that the set $S\backslash\{x\}$ is a
fault-tolerant resolving set, so $\mathrm{ftdim}(G)=n-1.$ Finally, for
$q\geq2$, let $x$ and $y$ be two vertices, such that each of them belongs to a
single partite set of $G.$ Assume that $x$ is not contained in $S,$ then
$S\backslash\{y\}$ is not a resolving set of $G,$ so we obtain $\mathrm{ftdim}%
(G)=n$..

Now that we have the exact value of both the metric dimension and the
fault-tolerant metric dimension, let us prove the claim of the proposition.
The inequalities are obvious from the fact that at most one partite set which
is singleton can miss in a resolving set, and from non-singleton partite sets
can as well miss only one vertex in a resolving set. The second part is
achieved by a complete multipartite graph which has exactly $d$ partite sets,
all of which have order at least $2$.
\end{proof}

Propositions \ref{Prop_trees} and \ref{Prop_multipartite} give us broad
classes of graphs $G$ for which $\mathrm{ftdim}(G)$ is bounded above by a
linear function in terms of $\mathrm{dim}(G)$. But the theoretical bound for
$\mathrm{ftdim}(G)$ in terms of $\mathrm{dim}(G)$ is given in the following
theorem established in \cite{bibFtUpperBound}.

\begin{theorem}
\label{Tm_spanci}Fault-tolerant metric dimension is bounded by a function of
the metric dimension (independent of the graph). In particular, for every
graph $G$ we have
\[
\mathrm{ftdim}(G)\leq\mathrm{dim}(G)(1+2\cdot5^{\mathrm{dim}(G)-1}).
\]

\end{theorem}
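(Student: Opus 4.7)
The plan is to start from a metric basis $W=\{w_1,\dots,w_k\}$ of $G$, where $k=\mathrm{dim}(G)$, and augment $W$ to a fault-tolerant resolving set by adding at most $2k\cdot 5^{k-1}$ carefully chosen vertices. Writing $r(v|W)=(d(v,w_1),\dots,d(v,w_k))$, the codes are pairwise distinct since $W$ resolves, and the only obstruction to fault-tolerance is the presence of pairs $\{u,v\}$ whose codes differ in exactly one coordinate. Grouping such pairs by the index $i$ of the differing coordinate and by the common $(k-1)$-tuple of shared coordinates produces, for each $i\in\{1,\dots,k\}$, a family of \emph{bad fibers}: sets of at least two vertices linearly ordered by their distances to $w_i$ and agreeing on all other coordinates.

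I would then handle each direction $i$ separately. Within a bad fiber $F$, I would add its two extreme members, namely the vertices of $F$ achieving the minimum and the maximum of $d(\cdot,w_i)$, as auxiliary witnesses. A short triangle-inequality check confirms that, together with $W$, these two endpoints supply at least two distinguishers for every pair of vertices of $F$. Since any pair of vertices of $G$ already doubly resolved by $W$ stays doubly resolved after augmentation, the whole task reduces to covering the bad fibers, so a uniform bound of the form $N_i\le 5^{k-1}$ on the number of bad fibers in direction $i$ yields at most $2k\cdot 5^{k-1}$ auxiliary vertices in total and hence the desired estimate $\mathrm{ftdim}(G)\le k(1+2\cdot 5^{k-1})$.

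The crux is therefore this last estimate. I would attempt a packing argument in the $(k-1)$-dimensional code space obtained by deleting coordinate $i$: the triangle inequality $|d(u,w_j)-d(v,w_j)|\le d(u,v)$ for $j\ne i$, combined with the integrality of coordinates and with the distance constraints linking the two extremes of each bad fiber back to $w_i$, should force the shared $(k-1)$-tuples of distinct bad fibers to lie in a product of small intervals around a common ``shadow'' of $w_i$; the constant $5$ should then arise from a one-dimensional packing (plausibly of values in a shifted $\{-2,-1,0,1,2\}$) raised to the $(k-1)$-th power. This packing estimate is the main obstacle and the technical heart of the argument, closely paralleling the Hernando--Mora--Pelayo--Seara--Wood cardinality bound for graphs of fixed metric dimension. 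Even if a tight packing turns out to yield a constant slightly larger than $5$, the same scheme still produces an exponential bound of the desired shape $\mathrm{ftdim}(G)\le k(1+2c^{k-1})$, so the principal technical challenge is simply to pin down the optimal constant.
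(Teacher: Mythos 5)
Your overall architecture (augment a metric basis $W$, isolate the dangerous pairs as those whose distance vectors differ in exactly one coordinate, and charge the added vertices to the $k$ coordinates) is sensible, but the step you yourself single out as the technical heart is not just hard to pin down --- it is false, and not merely up to the value of the constant. The number of bad fibers in a fixed direction $i$ is not bounded by any function of $k=\mathrm{dim}(G)$ at all. Take $G=C_{2n}$ with the metric basis $W=\{w_1,w_2\}=\{0,1\}$ consisting of two adjacent vertices, so $k=2$. For each $r$ with $2\le r\le n-1$, the two vertices at distance $r$ from $w_2$, namely $r+1$ and $2n+1-r$, are at distances $r+1$ and $r-1$ from $w_1$; hence they form a bad pair in direction $1$ whose shared coordinate is $r$. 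These are $n-2$ pairwise disjoint bad fibers with pairwise distinct shared tuples, far exceeding $5^{k-1}=5$ once $n\ge 8$, and the shared coordinates $r$ range over $\{2,\dots,n-1\}$, nowhere near the ``shadow'' $d(w_1,w_2)=1$. The misconception is that a pair resolved by only one landmark must lie close to that landmark in code space; it need not, so no packing argument in the deleted-coordinate code space can work, and your fallback (``a constant slightly larger than $5$'') does not rescue the scheme either. A secondary, independent gap: for a fiber with four or more members nothing forces the two extreme members to resolve the interior pairs --- the triangle inequality only gives lower bounds on the relevant distances, so that step is also unjustified as stated.

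The argument that actually works --- quoted by the paper from \cite{bibFtUpperBound} and reproduced in general form in the proof of Theorem \ref{Tm_boundK} via Lemmas \ref{Lemma_Jelena} and \ref{Lemma_spanci} --- counts a different object. One takes $S'=\{v\in V(G): d_G(v,W)\le 2\}$, the whole ball of radius $2$ around the basis. The key lemma shows that for every pair $x,y$ there is a vertex $s$ with $d_G(s,W)\le 2$, distinct from the unique resolving landmark, that still distinguishes $x$ and $y$; it is found by walking along a shortest path from $x$ or $y$ toward a nearest landmark, and in general it is neither $x$, nor $y$, nor a member of their fiber (in the cycle example above, the vertex $2$ resolves every one of the $n-2$ bad pairs in direction $1$). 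The quantity $5^{k-1}$ then enters through Lemma \ref{Lemma_spanci}: it bounds the number of \emph{vertices} at distance at most $2$ from a fixed $w_i$, whose codes occupy a box of side $2\cdot 2+1=5$ in the remaining $k-1$ coordinates --- not the number of fibers. So you have correctly guessed where the $5$ comes from, but attached it to the wrong object; the proof needs to be rebuilt around the radius-$2$ ball rather than around the bad fibers.
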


The above theorem implies that in the worst case scenario the fault-tolerant
dimension of a graph $G$ might be exponential in terms of the metric dimension
of $G$. Yet, this is only an upper bound on $\mathrm{ftdim}(G)$ which does not
have to be tight. To our knowledge, there is no result in literature which
shows that the upper bound on $\mathrm{ftdim}(G)$ needs to be exponential in
terms of $\mathrm{dim}(G).$ We will show that a graph $G$ for which
$\mathrm{ftdim}(G)$ is exponential indeed does exist, and that this holds for
any value of $\mathrm{dim}(G)$. First, let us introduce a particular kind of
graphs which will be of use to us in proving this claim.

\begin{figure}[h]
\begin{center}
\includegraphics[scale=0.6]{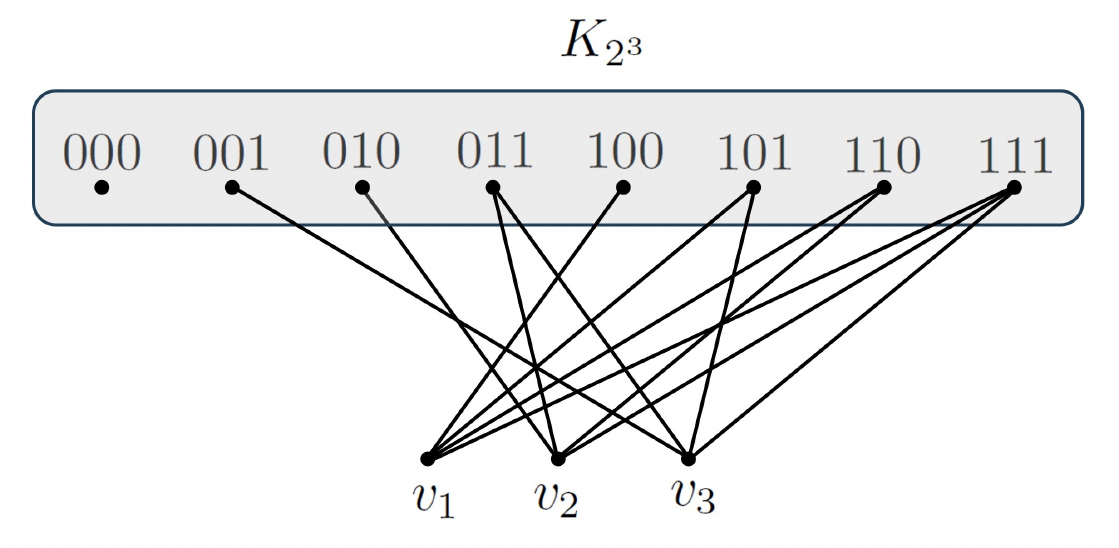}
\end{center}
\caption{The figure shows the graph $M_{t}$ from the proof of Theorem
\ref{Tm_ft} for $t=3$. The graph $M_{t}$ consists of a complete graph
$K_{2^{t}}$ whose vertices are labeled by binary codes of length $t$ and
additional vertices $v_{i}$ for $1\leq i\leq t$. A vertex $v_{i}$ is connected
to all vertices of $K_{2^{t}}$ which have $1$ on the $i$-th coordinate.}%
\label{Fig01}%
\end{figure}

Let $t\geq1$ be an integer and let $n=t+2^{t}$. We construct the graph $M_{t}$
on $n=t+2^{t}$ vertices, illustrated by Figure~{\ref{Fig01}}, as follows. Let
$V_{1}=\{u_{0},\ldots,u_{2^{t}-1}\}$ be a set of $2^{t}$ vertices and let
$V_{2}=\{v_{1},\ldots,v_{t}\}$ be a set of $t$ vertices. We identify a vertex
$u_{j}$ of $V_{1}$ with the binary code for $j$, i.e., we assume that $V_{1}$
consists of all binary codes of length $t$. For example, if $t=3$, then
$V_{1}=\{000,001,010,011,100,101,110,111\}$. Graph $M_{t}$ is defined to have
the set of vertices $V(M_{t})=V_{1}\cup V_{2}$. Further, each pair of vertices
from $V_{1}$ is connected by an edge in $M_{t}$, so vertices of $V_{1}$ induce
a complete subgraph $K_{2^{t}}$ in $M_{t}$. Vertex $v_{i}$ from $V_{2}$ is
adjacent in $M_{t}$ to all vertices of $V_{1}$ which have 1 on the $i$-th coordinate.

\begin{theorem}
\label{Tm_ft} For every $t\geq1,$ it holds that $\mathrm{dim}(M_{t})=t$ and
$\mathrm{ftdim}(M_{t})=t+2^{t-1}$.
\end{theorem}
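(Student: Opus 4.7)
The plan is to first work out the metric structure of $M_{t}$, then read off the distinguisher set of each pair of vertices, and use these to analyze both dimensions in turn. The distances are: $d(u_{j},u_{j'})=1$ for distinct $u_{j},u_{j'}\in V_{1}$ (since $V_{1}$ is a clique); $d(v_{i},u_{j})=1$ if $j_{i}=1$ and $d(v_{i},u_{j})=2$ otherwise; and $d(v_{i},v_{i'})=2$ whenever $i\neq i'$, since $v_{i}$ and $v_{i'}$ share a common neighbor in $V_{1}$ for $t\geq 2$ (the case $t=1$ is a path and is immediate). Consequently, for $j\neq j'$ the set of vertices that distinguish $\{u_{j},u_{j'}\}$ is exactly $\{u_{j},u_{j'}\}\cup\{v_{i}:j_{i}\neq j'_{i}\}$, because every other $u_{k}$ is at distance $1$ from both endpoints.

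For $\mathrm{dim}(M_{t})=t$, the upper bound follows since $V_{2}$ is a resolving set: the vector $(d(u_{j},v_{i}))_{i=1}^{t}=(2-j_{i})_{i=1}^{t}$ encodes the binary expansion of $j$, while each $v_{k}$ is distinguished from every other vertex by the $k$-th coordinate. For the lower bound, let $S$ be resolving. If $V_{2}\subseteq S$ we immediately have $|S|\geq t$. Otherwise some $v_{i}\notin S$, and then for every pair of the form $\{u_{j},u_{j\oplus e_{i}}\}$ (where $e_{i}$ is the $i$-th unit vector, so these are the direction-$i$ edges of the Boolean hypercube $Q_{t}$) the distinguisher set collapses to $\{u_{j},u_{j\oplus e_{i}}\}$. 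Hence $S\cap V_{1}$ must cover all these edges; they form a perfect matching of $Q_{t}$ of size $2^{t-1}$, so $|S|\geq|S\cap V_{1}|\geq 2^{t-1}\geq t$.

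For $\mathrm{ftdim}(M_{t})=t+2^{t-1}$ I would use the standard reformulation that $S$ is fault-tolerant if and only if every pair of vertices is resolved by at least two elements of $S$. Applied to $\{u_{j},u_{j\oplus e_{i}}\}$, whose distinguisher set has only the three elements $u_{j}$, $u_{j\oplus e_{i}}$, $v_{i}$, this produces a dichotomy: either $v_{i}\in S$ and the direction-$i$ edge of $Q_{t}$ is covered by $S\cap V_{1}$, or $v_{i}\notin S$ and \emph{both} endpoints lie in $S\cap V_{1}$. The lower bound then splits naturally: if some $v_{i}\notin S$, every vertex of $V_{1}$ is an endpoint of a direction-$i$ edge, forcing $V_{1}\subseteq S$ and $|S|\geq 2^{t}\geq t+2^{t-1}$; otherwise $V_{2}\subseteq S$ and $S\cap V_{1}$ is a vertex cover of the whole $Q_{t}$, whose minimum size is $2^{t-1}$ by K\"onig's theorem applied to the perfect matching of $Q_{t}$, yielding $|S|\geq t+2^{t-1}$. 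For the matching upper bound, let $A$ be the side of the natural bipartition of $Q_{t}$ consisting of codes of even Hamming weight, and set $S=V_{2}\cup A$; then $|S|=t+2^{t-1}$, $A$ is a vertex cover of $Q_{t}$, and a short case analysis over the three pair types $\{u_{j},u_{j'}\}$, $\{u_{j},v_{i}\}$, $\{v_{i},v_{i'}\}$ confirms that every pair is $2$-resolved by $S$.

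The main obstacle is the lower bound for $\mathrm{ftdim}$. The key insight is that the direction-$i$ hypercube edges supply the sharp local constraint, and once $2$-resolvability of these pairs is translated into a vertex-cover condition on $Q_{t}$, the bound $2^{t-1}$ falls out from the existence of a perfect matching of $Q_{t}$. The remaining pair types need to be checked in the upper-bound construction but never dominate the count, and the $\mathrm{dim}$ bound is essentially a simpler version of the same vertex-cover idea.
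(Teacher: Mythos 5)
Your proof is correct, and its lower-bound half takes a genuinely different route from the paper's. The two constructions agree exactly: both use $V_{2}$ as the metric basis and $V_{2}$ together with the even-weight vertices of $V_{1}$ as the fault-tolerant set. The difference is in how the lower bounds are obtained. The paper proves two counting claims --- a resolving set missing exactly one vertex of $V_{2}$ has size at least $t-1+2^{t-1}$ (via the $2^{t-1}$ disjoint pairs differing in one coordinate), and one missing at least two vertices of $V_{2}$ has size at least $3\cdot 2^{t-2}\geq t-1+2^{t-1}$ for $t\geq 3$ (via disjoint quadruples, each needing three of its four vertices) --- and then deduces the fault-tolerant lower bound by deleting one vertex $w$ from $S$ and applying these claims to $S\setminus\{w\}$. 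You instead invoke the characterization that $S$ is fault-tolerant iff every pair has at least two resolvers in $S$, compute the exact resolver set $\{u_{j},u_{j\oplus e_{i}},v_{i}\}$ of the critical pairs, and reduce both lower bounds to covering the edges of the hypercube $Q_{t}$; only the trivial direction of K\"onig is needed (a vertex cover meets every edge of a perfect matching). This buys you a shorter argument that avoids the quadruple count, the case split on $|S\cap V_{2}|$, and the paper's restriction to $t\geq 3$ --- your version works uniformly for $t\geq 2$, with $t=1$ being $P_{3}$. The one place you are thinner than the paper is the verification that $V_{2}\cup A$ is $2$-resolving: the only nontrivial pair type is $\{u_{j},v_{i}\}$, where besides $v_{i}$ itself you need a second resolver in $S$ --- e.g.\ the all-zeros vertex of $V_{1}$ (which lies in $A$, is at distance at most $1$ from $u_{j}$ and at distance $2$ from $v_{i}$) when $u_{j}$ is not all-zeros, and $u_{j}\in A$ itself otherwise --- so your sketch does close, but this case should be written out.
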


\begin{proof}
Let us first consider the small values of $t.$ For $t=1$ we have $M_{t}=P_{3}%
$, so the claim holds. For $t=2$, the graph $M_{t}$ has $6$ vertices, so the
claim is easily verified. Hence, we may assume $t\geq3$.

\medskip

\noindent\textbf{Claim A.} \emph{The set }$S=V_{2}\subseteq V(M_{t})$\emph{ is
a resolving set of }$M_{t}.\smallskip$

\noindent Let $x,y\in V(M_{t})$. If at least one of the vertices $x$ and $y$
belongs to $V_{2}$, say $x$, then $x$ and $y$ are resolved by $x\in S$. Assume
that both $x$ and $y$ belong to $V_{1}$. Since $x$ and $y$ are two distinct
vertices, they differ in at least one coordinate. Say, $x$ and $y$ differ in
$i$-th coordinate, where $x$ has $1$ and $y$ has $0$ on the $i$-th coordinate.
Then $x$ is adjacent to $v_{i}\in S$, and $y$ is not. This implies $x$ and $y$
are distinguished by $v_{i}\in S$. We conclude that $x$ and $y$ are resolved
by $S$ in all possible cases, so $S$ is a resolving set of $M_{t}$.

\medskip

\noindent\textbf{Claim B.} \emph{If }$S\subseteq V(M_{t})$\emph{ is a
resolving set in }$M_{t}$\emph{ and }$\left\vert S\cap V_{2}\right\vert
=t-1$,\emph{ then }$\left\vert S\right\vert \geq t-1+2^{t-1}.\smallskip$

\noindent From $\left\vert S\cap V_{2}\right\vert =t-1$, we conclude that $S$
does not contain precisely one vertex of $V_{2}$, say $v_{i}$. Let $x$ and $y$
be a pair of vertices from $V_{1}$ which differ only in the $i$-th coordinate,
say $x$ has $0$ on the $i$-th coordinate and $y$ has $1$. Then $x$ and $y$ are
not distinguished by any vertex of $V_{2}\setminus\{u_{i}\}$. Also, $x$ and
$y$ are not distinguished by any vertex of $V_{1}\setminus\{x,y\}$. This
implies that the set $S$ must contain one vertex from $\{x,y\}$ for each such
pair. Notice that for each possible combination of $0$'s and $1$'s on the part
of the code distinct from the $i$-th coordinate, there is a pair $x$ and $y$
which can be constructed by adding $0$ (resp. $1$) to the $i$-th coordinate.
Hence, there are $2^{t-1}$ such pairs $\{x,y\}$ and they are all pairwise
vertex disjoint. Since $S$ must contain at least one vertex from each such
pair, we conclude that $\left\vert S\right\vert =\left\vert S\cap
V_{1}\right\vert +\left\vert S\cap V_{2}\right\vert \geq t-1+2^{t-1}$.

\medskip

\noindent\textbf{Claim C.} \emph{If }$S\subseteq V(M_{t})$\emph{ is a
resolving set in }$M_{t}$\emph{ and }$\left\vert S\cap V_{2}\right\vert \leq
t-2$,\emph{ then }$\left\vert S\right\vert \geq t-1+2^{t-1}.\smallskip$

\noindent The assumption $\left\vert S\cap V_{2}\right\vert \leq t-2$ implies
that the set $S$ does not contain two vertices of $V_{2}$, say $v_{i}$ and
$v_{j}$. Let $\{x,y,z,w\}\subseteq V_{1}$ be a quadruple of vertices which
coincide on all coordinates except possibly on the $i$-th and/or $j$-th
coordinate. For each pair of vertices from such a quadruple to be
distinguished by $S$, the set $S$ must contain three out of four vertices of
the quadruple. Since there are $2^{t-2}$ such quadruples and they are all
pairwise vertex disjoint, we conclude that $S$ must contain $3\cdot2^{t-2}$
vertices from $V_{1}$. Hence, $\left\vert S\right\vert \geq3\cdot2^{t-2}\geq
t-1+2^{t-1}$ for $t\geq3$, and we are finished.

\medskip

Notice that $t-1+2^{t-1}>t$, hence Claims~B and C imply that the set $S$ from
Claim~A is the smallest resolving set of $M_{t}$. Hence, $\mathrm{dim}%
(M_{t})=t$. It remains to establish that $\mathrm{ftdim}(M_{t})=t+2^{t-1}$.
For that purpose, we need the following claims.

\medskip

\noindent\textbf{Claim D.} \emph{There exists a fault-tolerant resolving set
}$S$\emph{ in }$M_{t}$\emph{ with }$\left\vert S\right\vert =t+2^{t-1}%
.\smallskip$

\noindent Let $S$ be a set which consists of all vertices from $V_{2}$ and all
those vertices from $V_{1}$ which have an even number of $1$'s in their binary
code. Obviously, $\left\vert S\right\vert =t+2^{t-1}$, so it remains to
establish that $S$ is a fault-tolerant resolving set in $M_{t}$. That is, we
have to show that $S\setminus\{w\}$ is a resolving set of $M_{t}$ for every
$w\in S$. If $w\in V_{1}$, then $V_{2}\subseteq S\setminus\{w\}$, so Claim~A
implies that $S\setminus\{w\}$ is a resolving set in $M_{t}$. It remains to
consider the case when $w\in V_{2}$.

Let $x,y$ be a pair of vertices in $M_{t}$. If $\{x,y\}\subseteq V_{2}$, then
at least one of $x$ and $y$ belongs to $S\setminus\{w\}$, say $x$. Hence, $x$
and $y$ are distinguished by $x\in S\setminus\{w\}$. If $\{x,y\}\subseteq
V_{1}$, then we distinguish two cases. In the case when at least one of $x$
and $y$ has an even number of $1$'s, say $x$, then $x$ belongs to
$S\setminus\{w\}$, so $x$ and $y$ are distinguished by $x\in S\setminus\{w\}$.
In the case when both $x$ and $y$ have an odd number of $1$'s, then $x$ and
$y$ differ in at least two coordinates, say coordinates $i$ and $j$. Since
$S\setminus\{w\}$ contains all vertices of $V_{2}$ except $w$, this implies
that one of $v_{i}$ and $v_{j}$ must be included in $S\setminus\{w\}$, say
$v_{i}\in S\setminus\{w\}$. Then $x$ and $y$ are distinguished by $v_{i}\in
S\setminus\{w\}$. Finally, assume that $x\in V_{1}$ and $y\in V_{2}$. Then the
vertex of $V_{1}$ with coordinates $0$ only has distance at most $1$ to $x$,
while its distance to $y$ is $2$. Since this vertex is in $S\setminus\{w\}$
(recall that $w\in V_{2}$), the set $S\setminus\{w\}$ distinguishes $x$ and
$y$. So, we have established that $x$ and $y$ are distinguished by
$S\setminus\{w\}$ in all possible cases, which implies $S$ is a $2$-resolving
set as claimed.

\medskip

In order to establish that $\mathrm{ftdim}(M_{t})=t+2^{t-1}$, it remains to
show that the resolving set from Claim~D is a smallest fault-tolerant
resolving set.

\medskip

\noindent\textbf{Claim E.} \emph{If }$S\subseteq V(M_{t})$\emph{ is a
fault-tolerant resolving set in }$G$,\emph{ then }$\left\vert S\right\vert
\geq t+2^{t-1}.\smallskip$

\noindent Notice that $S^{\prime}=S\setminus\{w\}$ must be a resolving set for
every $w\in S$. If $S$ contains a vertex of $V_{2}$, let $w\in S\cap V_{2}$.
Denote $S^{\prime}=S\setminus\{w\}$ and notice that Claims B and C imply that
$\left\vert S^{\prime}\right\vert \geq t-1+2^{t-1}$, so $\left\vert
S\right\vert =\left\vert S^{\prime}\right\vert +1\geq t+2^{t-1}$, as claimed.
If $S$ does not contain a vertex of $V_{2}$, then let $w$ be any vertex of
$S$. Notice that $t\geq3$ together with Claim C imply that $\left\vert
S^{\prime}\right\vert \geq t-1+2^{t-1}$, so again we have $\left\vert
S\right\vert \geq t+2^{t-1}$ as claimed.

\medskip

Notice that Claims D and E imply that $\mathrm{ftdim}(M_{t})=t+2^{t-1}$, which
concludes the proof.
\end{proof}

\section{$k$-metric dimension}

Since the notion of the fault-tolerant metric dimension is generalized by the
$k$-metric dimension, it is natural to ask how the $k$-metric dimension
$\mathrm{dim}_{k+1}(G)$ behaves in terms of $\mathrm{dim}_{k}(G)$, for
$k\geq2$, provided that $G$ is a graph with $\kappa(G)\geq k+1$. First, let us
briefly comment the lower bound on $\mathrm{dim}_{k+1}(G)$. The following
result is established in \cite{Estrada-Moreno2013}.

\begin{proposition}
\label{Prop_Yero}Let $k\geq2$ and let $G$ be a graph with $\kappa(G)\geq k+1$.
Then $\mathrm{dim}_{k+1}(G)\geq\mathrm{dim}_{k}(G)+1$.
\end{proposition}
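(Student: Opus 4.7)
The plan is to recast the definition of a $k$-resolving set as a pointwise counting condition. For distinct vertices $x,y \in V(G)$, let $R(x,y) = \{s \in V(G) : d_G(s,x) \neq d_G(s,y)\}$ denote the set of all vertices that distinguish the pair. My first step would be to verify the equivalence that $S \subseteq V(G)$ is a $k$-resolving set if and only if $|R(x,y) \cap S| \geq k$ for every pair $x \neq y$ of vertices. The nontrivial direction is by contrapositive: if some pair $x,y$ satisfies $|R(x,y) \cap S| \leq k-1$, then choosing any $S' \subseteq S$ of size $k-1$ that contains $R(x,y) \cap S$ leaves $S \setminus S'$ unable to distinguish $x$ from $y$, so $S$ fails to be a $k$-resolving set. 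The converse is immediate.

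Once the pointwise characterization is available, the inequality itself follows in a single line. Let $S$ be a minimum $(k+1)$-resolving set, so that $|S| = \mathrm{dim}_{k+1}(G)$, and pick any $s \in S$. Setting $S^{\ast} = S \setminus \{s\}$, for every pair $x,y$ of vertices we have
\[
|R(x,y) \cap S^{\ast}| \;\geq\; |R(x,y) \cap S| - 1 \;\geq\; (k+1) - 1 \;=\; k,
\]
so $S^{\ast}$ is a $k$-resolving set by the characterization. Hence $\mathrm{dim}_k(G) \leq |S^{\ast}| = \mathrm{dim}_{k+1}(G) - 1$, which yields the claimed inequality.

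There is no serious obstacle in this approach: the only genuinely nontrivial step is establishing the pointwise characterization, and the remainder reduces to a one-step subtraction. The hypothesis $\kappa(G) \geq k+1$ is used merely to guarantee that both $\mathrm{dim}_k(G)$ and $\mathrm{dim}_{k+1}(G)$ are well defined, so that a minimum $(k+1)$-resolving set $S$ exists to start the argument; it plays no deeper role.
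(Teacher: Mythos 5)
Your proof is correct. Note, however, that the paper offers no proof of this proposition to compare against: it is stated as a known result imported from the cited work of Estrada-Moreno, Rodr\'{\i}guez-Vel\'{a}zquez and Yero. Your argument is the standard one, and in fact the pointwise characterization you establish --- $S$ is a $k$-resolving set if and only if $\left\vert R(x,y)\cap S\right\vert \geq k$ for every pair $x\neq y$ --- is precisely how that reference \emph{defines} $k$-metric generators, so your ``nontrivial step'' is really a proof that the present paper's subtractive definition (every $S\setminus S^{\prime}$ with $\left\vert S^{\prime}\right\vert =k-1$ resolves $G$) agrees with the usual counting definition. That equivalence is worth stating carefully: your contrapositive needs $\left\vert S\right\vert \geq k-1$ in order to extend $R(x,y)\cap S$ to a set $S^{\prime}$ of size exactly $k-1$, and for $\left\vert S\right\vert <k-1$ the subtractive condition is vacuously true, a degenerate case one must implicitly exclude (it never arises for the minimum $(k+1)$-resolving set you work with, since that set has at least $k+1$ elements, so the proof of the proposition itself is unaffected). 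The hypothesis $\kappa(G)\geq k+1$ is used exactly as you say, to guarantee that $\mathrm{dim}_{k+1}(G)$ is defined.
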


Let $k\geq2$ and $n\geq k+2$ be a pair of integers. It is easily seen that for
the path $P_{n}$ it holds that
\[
\mathrm{dim}_{k}(P_{n})=\left\{
\begin{array}
[c]{ll}%
k & \text{for }k\leq2,\\
k+1 & \text{otherwise.}%
\end{array}
\right.
\]
Hence, the path $P_{n}$ achieves the lower bound from Proposition
\ref{Prop_Yero} for every $k\not =2$.

Let us next consider the upper bound on $\mathrm{dim}_{k+1}(G)$ in terms of
$\mathrm{dim}_{k}(G)$. We will use the similar approach by which the bound
from Theorem \ref{Tm_spanci} is obtained in \cite{bibFtUpperBound}, so we
start with the following lemma.

\begin{lemma}
\label{Lemma_Jelena}Let $k\geq2$ be an integer, $G$ a graph with
$\kappa(G)\geq k+1$ and $S\subseteq V(G)$ a $k$-resolving set of $G$. Let
$x,y\in V(G)$ be a pair of vertices in $G$ and $\{s_{1},\ldots,s_{k}%
\}\subseteq S$ a set of vertices which distinguish $x$ and $y$. Then, there
exists a vertex $s\in V(G)\setminus\{s_{1},\ldots,s_{k}\}$ such that
$d_{G}(s,S)\leq2$ and $s$ distinguishes $x$ and $y$.
\end{lemma}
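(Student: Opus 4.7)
The plan is to prove the lemma by examining shortest paths between the relevant vertices. The key observation I will use repeatedly is this: if $w\in V(G)$ satisfies $d(w,x)<d(w,y)$ and $P\colon w=p_0,p_1,\ldots,p_m=x$ is any shortest $w$-$x$ path, then every vertex $p_i$ on $P$ distinguishes $x$ and $y$, since $d(p_i,x)=m-i$ and $d(p_i,y)\geq d(w,y)-i>m-i$.

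After possibly relabeling $x$ and $y$, I may assume $d(s_k,x)<d(s_k,y)$ and take a shortest $s_k$-$x$ path $P\colon s_k=p_0,p_1,\ldots,p_m=x$. If some $p_j$ on $P$ lies outside $\{s_1,\ldots,s_k\}$, then for the smallest such $j$ the predecessor $p_{j-1}$ lies in $\{s_1,\ldots,s_k\}\subseteq S$, so $d(p_j,S)\leq 1$ and $s=p_j$ works. Otherwise the whole of $P$ lies in $\{s_1,\ldots,s_k\}$, forcing $x=p_m\in\{s_1,\ldots,s_k\}$; I then walk along a shortest $x$-$y$ path $Q\colon x=z_0,z_1,\ldots,z_d=y$, on which $z_i$ distinguishes $x,y$ exactly when $i\neq d/2$. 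Assuming $y\notin\{s_1,\ldots,s_k\}$, I let $j'$ be the largest index with $z_{j'}\in\{s_1,\ldots,s_k\}$; then $z_{j'+1}$ lies outside $\{s_1,\ldots,s_k\}$ at distance $1$ from $z_{j'}\in S$. If $j'+1\neq d/2$, take $s=z_{j'+1}$; otherwise $z_{j'+2}$ is also outside $\{s_1,\ldots,s_k\}$ by maximality of $j'$, distinguishes $x,y$, and satisfies $d(z_{j'+2},z_{j'})=2$, so $s=z_{j'+2}$.

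The main obstacle is the remaining case, in which both $x$ and $y$ lie in $\{s_1,\ldots,s_k\}$ and the walks above may stay entirely inside $\{s_1,\ldots,s_k\}$. Here I invoke the hypothesis $\kappa(G)\geq k+1$: any $(k+1)$-resolving set of $G$ contains at least $k+1$ vertices distinguishing $x,y$, hence there exists a distinguishing vertex $v\in V(G)\setminus\{s_1,\ldots,s_k\}$. If $d(v,S)\leq 2$, set $s=v$. Otherwise, without loss of generality $d(v,x)<d(v,y)$; I walk along a shortest $v$-$x$ path $v=u_0,u_1,\ldots,u_L=x$. By the key observation every $u_i$ distinguishes $x,y$, and since $u_L=x\in\{s_1,\ldots,s_k\}\subseteq S$, the walk eventually enters $\{s_1,\ldots,s_k\}$. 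Taking $u_j$ to be the last vertex of this walk outside $\{s_1,\ldots,s_k\}$, we have $u_{j+1}\in\{s_1,\ldots,s_k\}\subseteq S$, so $d(u_j,S)\leq 1$ and $s=u_j$ is the desired vertex.
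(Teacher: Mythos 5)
Your proof is correct: I checked each branch --- the minimality/maximality choices, the exactness of distances along shortest paths, the midpoint criterion $i\neq d/2$, the existence of $z_{j'+2}$ when $j'+1=d/2$ (which forces $d\geq2$, so the index is valid), and the derivation of a distinguishing vertex outside $\{s_1,\ldots,s_k\}$ from $\kappa(G)\geq k+1$ --- and found no gaps. It uses the same two ingredients as the paper's proof, namely the triangle-inequality fact that every vertex of a shortest path from a vertex closer to $x$ than to $y$ distinguishes the pair, and the hypothesis $\kappa(G)\geq k+1$ to supply an extra distinguishing vertex, but the architecture is genuinely different. The paper argues by contradiction (assuming every distinguishing vertex outside $\{s_1,\ldots,s_k\}$ has $d_G(\cdot,S)\geq3$), deduces $x,y\notin S\setminus\{s_1,\ldots,s_k\}$, and splits into three cases by the membership of $x$ and $y$ in $\{s_1,\ldots,s_k\}$ versus $V(G)\setminus S$; its Case~1 scans the $x$--$y$ geodesic for the first vertex outside all of $S$, and the possibility that this vertex is the non-distinguishing midpoint costs a further sub-analysis (the vertices $s'$, $p'$, $s''$ and the subpath $P'$). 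You argue directly, with cases driven instead by whether certain geodesics stay inside $\{s_1,\ldots,s_k\}$: a geodesic from $s_k$ to the nearer endpoint either exits $\{s_1,\ldots,s_k\}$ (yielding $s$ at distance $1$ from $S$) or forces $x\in\{s_1,\ldots,s_k\}$, then the $x$--$y$ geodesic handles $y\notin\{s_1,\ldots,s_k\}$, and only the residual case $x,y\in\{s_1,\ldots,s_k\}$ invokes $\kappa(G)\geq k+1$. Two simplifications fall out of this: tracking membership in $\{s_1,\ldots,s_k\}$ rather than in $S$ (a vertex of $S\setminus\{s_1,\ldots,s_k\}$ encountered by your scan is immediately a valid witness with $d_G(s,S)=0$, whereas the paper's contradiction hypothesis must scan past such vertices), and the ``last vertex inside, skip one if you land on the midpoint'' device, which replaces the paper's longest case with two lines and makes transparent that the allowance $d_G(s,S)\leq2$, rather than $\leq1$, is needed precisely for the midpoint of an even-length $x$--$y$ geodesic. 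What the paper's framing buys in exchange is that its three cases are exhaustive at a glance; your version is constructive and localizes the $(k+1)$-dimensionality hypothesis to the one case where it is indispensable.
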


\begin{proof}
Since $G$ is a $\kappa(G)$-dimensional graph for $\kappa(G)\geq k+1$, there
must exist a vertex $s\in V(G)\setminus\{s_{1},\ldots,s_{k}\}$ which
distinguishes $x$ and $y$. We have to show that for at least one such vertex
$s$ it holds that $d_{G}(s,S)\leq2$. By way of contradiction, assume that for
every vertex $s\in V(G)\setminus\{s_{1},\ldots,s_{k}\}$ which distinguishes
$x$ and $y$ it holds that $d_{G}(s,S)\geq3$. Our assumption implies that
$s_{1},\ldots,s_{k}$ are the only vertices of $S$ which distinguish $x$ and
$y$. Let $s\in V(G)\setminus S$ be a vertex which distinguishes $x$ and $y$
such that $d_{G}(s,S)$ is minimum possible. Notice that $x$ and $y$ are
distinguished by each of the vertices $x$ and $y$, so we conclude that
$x,y\not \in S\setminus\{s_{1},\ldots,s_{k}\}$. Hence, for each of $x$ and $y$
it holds that it belongs either to $\{s_{1},\ldots,s_{k}\}$ or to
$V(G)\setminus S$. We distinguish three cases.

\medskip

\noindent\textbf{Case 1:} $x$\emph{ belongs to }$\{s_{1},\ldots,s_{k}\}$\emph{
and }$y$\emph{ to }$V(G)\setminus S$. Let $P$ be a shortest path which
connects $x$ and $y$. Assume first that $P$ is of an odd length. Then each
vertex of $P$ distinguishes $x$ and $y$. Since $x$ belongs to $S$ and $y$ to
$V(G)\setminus S$, there must exist a vertex $s^{\prime}$ of $P$ which does
not belong to $S$ such that the distance $d_{G}(x,s^{\prime})$ is the smallest
possible. Notice that it can happen $s^{\prime}=y$. Obviously, it holds that
$d_{G}(s^{\prime},S)=1$. Hence, we have found a vertex $s^{\prime}\in
V(G)\setminus S$ with $d_{G}(s^{\prime},S)=1$ which distinguishes $x$ and $y$,
a contradiction with our assumption that $d_{G}(s,S)\geq3$ for every such
vertex $s$.

Assume next that $P$ is of an even length. Then, the length of $P$ is at least
two, and there exists precisely one vertex $p$ on $P$ which does not
distinguish $x$ and $y$. Notice that the vertex $p$ is the middle vertex of
$P$, hence $p\not \in \{x,y\}$. Again, let $s^{\prime}$ be a vertex of
$V(P)\setminus S$ with $d_{G}(s^{\prime},x)$ minimum possible. Since
$x\in\{s_{1},\ldots,s_{k}\}\subseteq S$ and $y\in V(G)\setminus S$, such a
vertex $s^{\prime}$ must exist. Also, it obviously holds that $d_{G}%
(s^{\prime},S)=1$, as otherwise there would exist a vertex of $V(P)\setminus
S$ closer to $x$ than $s^{\prime}$.

Now, if $s^{\prime}\not =p$, we have again found a vertex $s^{\prime}\in
V(G)\setminus S$ which distinguishes $x$ and $y$, and which satisfies
$d_{G}(s^{\prime},S)=1$, a contradiction. Otherwise, if $s^{\prime}=p$, let
$p^{\prime}$ be the neighbor of $s^{\prime}$ on $P$ with $d_{G}(x,p^{\prime
})=d_{G}(x,p)+1$. Denote by $P^{\prime}$ the subpath of $P$ connecting
$p^{\prime}$ and $y$. Notice that it may happen $p^{\prime}=y$, in which case
the path $P^{\prime}$ consists of a single vertex. Let $s^{\prime\prime}$ be
the vertex of $P^{\prime}$ which does not belong to $S$ chosen so that the
distance $d_{G}(s^{\prime\prime},S)$ is the minimum possible. Since
$y\not \in S$, such a vertex $s^{\prime\prime}$ must exist on $P^{\prime}$. If
$s^{\prime\prime}=p^{\prime}$, this means that $s^{\prime\prime}$ is the
neighbor of $s^{\prime}$ on $P$. Observe that $d_{G}(s^{\prime},S)=1$ implies
that $d_{G}(s^{\prime\prime},S)\leq2$. Since $s^{\prime\prime}\not =p$ and $p$
is the only vertex of $P$ which does not distinguish $x$ and $y$, we conclude
that $s^{\prime\prime}$ does distinguish $x$ and $y$. Therefore, we have again
found a vertex $s^{\prime\prime}\in V(G)\setminus S$ with $d(s^{\prime\prime
},S)\leq2$ which distinguishes $x$ and $y$, a contradiction. On the other
hand, if $s^{\prime\prime}\not =p^{\prime},$ this implies that the neighbor of
$s^{\prime\prime}$ on $P^{\prime}$ belongs to $S,$ as otherwise the distance
$d_{G}(s^{\prime\prime},S)$ would not be the minimum possible. So,
$s^{\prime\prime}$ is the first vertex of $P^{\prime}$ after $s^{\prime}=p$ in
the direction of $y$ which does not belong to $S,$ and such a vertex
$s^{\prime\prime}$ must exist since $y\not \in S.$ So, we have $d(s^{\prime
\prime},S)=1$ and $s^{\prime\prime}\not =p$ implies that $s^{\prime\prime}$
distinguishes $x$ and $y,$ a contradiction.

\medskip

\noindent\textbf{Case 2:} \emph{both }$x$\emph{ and }$y$\emph{ belong to
}$\{s_{1},\ldots,s_{k}\}$. Recall that $s\in V(G)\setminus S$ is a vertex
which distinguishes $x$ and $y$ such that $d_{G}(s,S)$ is minimum possible.
Let $P_{x}$ (resp. $P_{y}$) be a shortest path which connects $x$ and $s$
(resp. $y$ and $s$). Since $x$ and $y$ are distinguished by $s$, it holds that
$d_{G}(x,s)\not =d_{G}(y,s)$. Without loss of generality, we may assume that
$d_{G}(x,s)<d_{G}(y,s)$. We show that every vertex of $P_{x}$ must distinguish
$x$ and $y$. Assume to the contrary, that there exists a vertex $q\in
V(P_{x})$ such that $d_{G}(x,q)=d_{G}(y,q)$. Then we have $d_{G}(y,s)\leq
d_{G}(y,q)+d_{G}(q,s)=d_{G}(x,q)+d_{G}(q,s)=d_{G}(x,s)$, a contradiction with
$d_{G}(x,s)<d_{G}(y,s)$. Now, since $x$ and $s$ are the two end-vertices of
$P_{x}$, where $x\in S$ and $s\not \in S$, there must exist a vertex
$s^{\prime}\in V(P_{x})\setminus V(S)$ with $d_{G}(s^{\prime},x)$ being
minimum possible. For such a vertex $s^{\prime}$ it holds that $d_{G}%
(s^{\prime},S)=1$ and $s^{\prime}$ distinguishes $x$ and $y$, a contradiction.

\medskip

\noindent\textbf{Case 3:} \emph{both }$x$\emph{ and }$y$\emph{ belong to
}$V(G)\setminus S$. Let $d_{x}=\min\{d_{G}(x,s_{i}):1\leq i\leq k\}$ and
$d_{y}=\min\{d_{G}(y,s_{i}):1\leq i\leq k\}$. Without loss of generality we
may assume $d_{x}\leq d_{y}$. Let $s_{i}\in\{s_{1},\ldots,s_{k}\}$ be a vertex
with $d_{G}(s_{i},x)=d_{x}$. Let $P_{x}$ (resp. $P_{y}$) be a shortest path
connecting $s$ and $x$ (resp. $s$ and $y$). Since $s_{i}$ distinguishes $x$
and $y$, it holds that $d_{G}(x,s_{i})\not =d_{G}(y,s_{i})$. From $d_{x}\leq
d_{y}$ and $d_{G}(x,s_{i})=d_{x}$ we infer that $d_{G}(x,s_{i})<d_{G}%
(y,s_{i})$. Let us show that every vertex of $P_{x}$ must distinguish $x$ and
$y$. To see this, assume to the contrary that there exists a vertex $p\in
V(P_{x})$ with $d_{G}(x,p)=d_{G}(y,p)$. Then we have $d_{G}(y,s_{i})\leq
d_{G}(y,p)+d_{G}(p,s_{i})=d_{G}(x,p)+d_{G}(p,s_{i})=d_{G}(x,s_{i})$ which is a
contradiction with $d_{G}(x,s_{i})<d_{G}(y,s_{i})$. Hence, we have established
that every vertex of $P_{x}$ distinguishes $x$ and $y$. Recall that the
end-vertices of $P_{x}$ are $x$ and $s_{i}$, where $x\not \in S$ and $s_{i}\in
S$. This implies that there must exist a vertex $s^{\prime}\in V(P_{x}%
)\setminus S$ with $d_{G}(s^{\prime},s_{i})$ minimum possible. Obviously,
$d_{G}(s^{\prime},S)=1$ and $s^{\prime}$ distinguishes $x$ and $y$, a contradiction.
\end{proof}

We also need the following lemma from \cite{bibFtUpperBound}.

\begin{lemma}
\label{Lemma_spanci}Let $S$ be a resolving set in a graph $G$. Then for each
vertex $v\in S$, the number of vertices of $G$ at distance at most $d$ from
$v$ is at most $1+d(2d+1)^{\left\vert S\right\vert -1}$.
\end{lemma}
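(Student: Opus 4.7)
The plan is to bound the number of possible distinct metric codes that vertices in the closed ball of radius $d$ around $v$ can have, and then invoke the fact that $S$ being resolving forces these codes to all be different. Write $S=\{s_{1},\ldots,s_{m}\}$ with $m=\left\vert S\right\vert $ and, without loss of generality, $v=s_{1}$. For every vertex $u\in V(G)$, consider its metric code with respect to $S$, namely the vector
\[
r(u|S)=\bigl(d_{G}(u,s_{1}),d_{G}(u,s_{2}),\ldots,d_{G}(u,s_{m})\bigr).
\]
Since $S$ is a resolving set, the map $u\mapsto r(u|S)$ is injective on $V(G)$.

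Next I would estimate how many such codes are possible for vertices $u$ with $d_{G}(u,v)\leq d$. The first coordinate $d_{G}(u,v)$ lies in $\{0,1,\ldots,d\}$; the value $0$ is achieved only by $u=v$, so it contributes exactly one code, while values in $\{1,\ldots,d\}$ give at most $d$ choices. For any other coordinate $i\geq 2$, the triangle inequality yields
\[
\bigl|d_{G}(u,s_{i})-d_{G}(v,s_{i})\bigr|\leq d_{G}(u,v)\leq d,
\]
so $d_{G}(u,s_{i})$ takes at most $2d+1$ distinct integer values. Multiplying over the remaining $m-1$ coordinates, the number of codes with nonzero first coordinate is at most $d\cdot(2d+1)^{m-1}$, and the single code with first coordinate $0$ accounts for the extra $+1$.

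Combining both contributions, the total number of possible metric codes for vertices at distance at most $d$ from $v$ is at most $1+d(2d+1)^{m-1}$. Since distinct vertices receive distinct codes by the resolving property of $S$, the closed ball of radius $d$ around $v$ contains at most $1+d(2d+1)^{\left\vert S\right\vert -1}$ vertices, as desired.

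There is essentially no obstacle here: the only care needed is to separate the contribution of $v$ itself (first coordinate $0$) from the contribution of the other vertices in the ball, so that the bound comes out as $1+d(2d+1)^{|S|-1}$ rather than the cruder $(d+1)(2d+1)^{|S|-1}$.
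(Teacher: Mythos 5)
Your proof is correct. Note that the paper does not prove this lemma at all --- it is imported verbatim from \cite{bibFtUpperBound} --- and your argument (injectivity of the metric codes on $V(G)$, the value $0$ in the first coordinate isolated for $v$ itself, and the triangle inequality confining each remaining coordinate to $2d+1$ integer values) is exactly the standard proof from that reference, so there is nothing to compare against and nothing to fix.
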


We are now in a position to establish the following theorem which gives an
upper bound on $\mathrm{dim}_{k+1}(G)$ in terms of $\mathrm{dim}_{k}(G)$.

\begin{theorem}
\label{Tm_boundK}Let $k\geq2$ be an integer and $G$ a graph with
$\kappa(G)\geq k+1$. The $(k+1)$-metric dimension of $G$ is bounded by a
function of the $k$-metric dimension of $G$. In particular, $\mathrm{dim}%
_{k+1}(G)\leq\mathrm{dim}_{k}(G)(1+2\cdot5^{\mathrm{dim}_{k}(G)-1})$.
\end{theorem}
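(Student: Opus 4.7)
The plan is to mirror the proof strategy used in \cite{bibFtUpperBound} for Theorem~\ref{Tm_spanci}, now replacing its key ingredient by Lemma~\ref{Lemma_Jelena} and reusing Lemma~\ref{Lemma_spanci}. The guiding idea is that the desired $(k+1)$-resolving set will be obtained from a minimum $k$-resolving set $S$ by enlarging it to include every vertex at distance at most $2$ from $S$; Lemma~\ref{Lemma_Jelena} ensures that such an enlargement supplies the required additional distinguisher for each pair of vertices, while Lemma~\ref{Lemma_spanci} keeps the cardinality under control.

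Concretely, I would let $S$ be a $k$-resolving set of $G$ with $|S|=\mathrm{dim}_k(G)$, and set
\[
S^{*}=\{v\in V(G):d_G(v,S)\le 2\},
\]
so that $S\subseteq S^{*}$. Since $k\ge 2$, the set $S$ is itself a resolving set, so Lemma~\ref{Lemma_spanci} applies with $d=2$ to each $v\in S$, yielding at most $1+2\cdot 5^{|S|-1}$ vertices in the closed ball of radius $2$ around $v$. Taking the union over $v\in S$, we obtain
\[
|S^{*}|\le |S|\bigl(1+2\cdot 5^{|S|-1}\bigr) = \mathrm{dim}_k(G)\bigl(1+2\cdot 5^{\mathrm{dim}_k(G)-1}\bigr),
\]
which is exactly the desired upper bound.

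Next, I would verify that $S^{*}$ is a $(k+1)$-resolving set, equivalently, that every pair of distinct vertices $x,y\in V(G)$ is distinguished by at least $k+1$ vertices of $S^{*}$. Since $S$ is $k$-resolving and $S\subseteq S^{*}$, we already have at least $k$ distinguishers $s_1,\ldots,s_k\in S\subseteq S^{*}$ for the pair $\{x,y\}$. Lemma~\ref{Lemma_Jelena} then provides a further vertex $s\in V(G)\setminus\{s_1,\ldots,s_k\}$ with $d_G(s,S)\le 2$ that also distinguishes $x$ and $y$; by construction $s\in S^{*}$, giving the required $(k+1)$-th distinguisher.

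The heavy lifting has already been done in Lemmas~\ref{Lemma_Jelena} and \ref{Lemma_spanci}, so I do not expect a serious obstacle in this argument. The two minor points to make explicit are the equivalence between being a $(k+1)$-resolving set and having each pair distinguished by at least $k+1$ members of the set, and the observation that a $k$-resolving set with $k\ge 2$ is automatically a resolving set, which is what legitimises the use of Lemma~\ref{Lemma_spanci}.
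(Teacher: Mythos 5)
Your proposal is correct and follows essentially the same route as the paper's proof: both take a minimum $k$-resolving set $S$, enlarge it to the set of all vertices within distance $2$ of $S$, invoke Lemma~\ref{Lemma_Jelena} to supply the $(k+1)$-st distinguisher for each pair, and bound the size via Lemma~\ref{Lemma_spanci} with $d=2$. No substantive differences to report.
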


\begin{proof}
Let $S$ be a $k$-resolving set of $G$. It is sufficient to show that there
exists a $(k+1)$-resolving set $S^{\prime}$ in $G$ with $\left\vert S^{\prime
}\right\vert \leq\left\vert S\right\vert (1+2\cdot5^{\left\vert S\right\vert
-1})$. We define $S^{\prime}$ to consist of all vertices of $G$ whose distance
to $S$ is at most two, i.e. $S^{\prime}=\{v\in V(G):d_{G}(v,S)\leq2\}$.

First, let us establish that $S^{\prime}$ is a $(k+1)$-resolving set. Let $x$
and $y$ be a pair of vertices of $G$, and let $\{s_{1},\ldots,s_{k}\}\subseteq
S$ be a set of $k$ vertices which distinguish $x$ and $y$. Lemma
\ref{Lemma_Jelena} implies that there exists a vertex $s\in V(G)\setminus
\{s_{1},\ldots,s_{k}\}$ such that $d(s,S)\leq2$ and $s$ distinguishes $x$ and
$y$. By the definition of $S^{\prime}$ we know that $s\in S^{\prime}$. We
conclude that $S^{\prime}$ contains $k+1$ vertices $s_{1},\ldots,s_{k},s$
which distinguish $x$ and $y$, so $S^{\prime}$ is a $(k+1)$-resolving set of
$G$.

Next, let us establish the desired upper bound on $\left\vert S^{\prime
}\right\vert $. Since $S$ is a $k$-resolving set of $G$, and $k\ge2$, $S$ is
also a resolving set of $G$. Hence we can apply Lemma \ref{Lemma_spanci} to it
with $d=2$. This yields that for each vertex $v$ of $S$, the graph $G$
contains at most $1+2\cdot5^{\left\vert S\right\vert -1}$ vertices at distance
at most $2$ from $v$. We obtain that $\left\vert S^{\prime}\right\vert
\leq\left\vert S\right\vert (1+2\cdot5^{\left\vert S\right\vert -1})$ and we
are done.
\end{proof}

\begin{figure}[h]
\begin{center}
\includegraphics[scale=0.6]{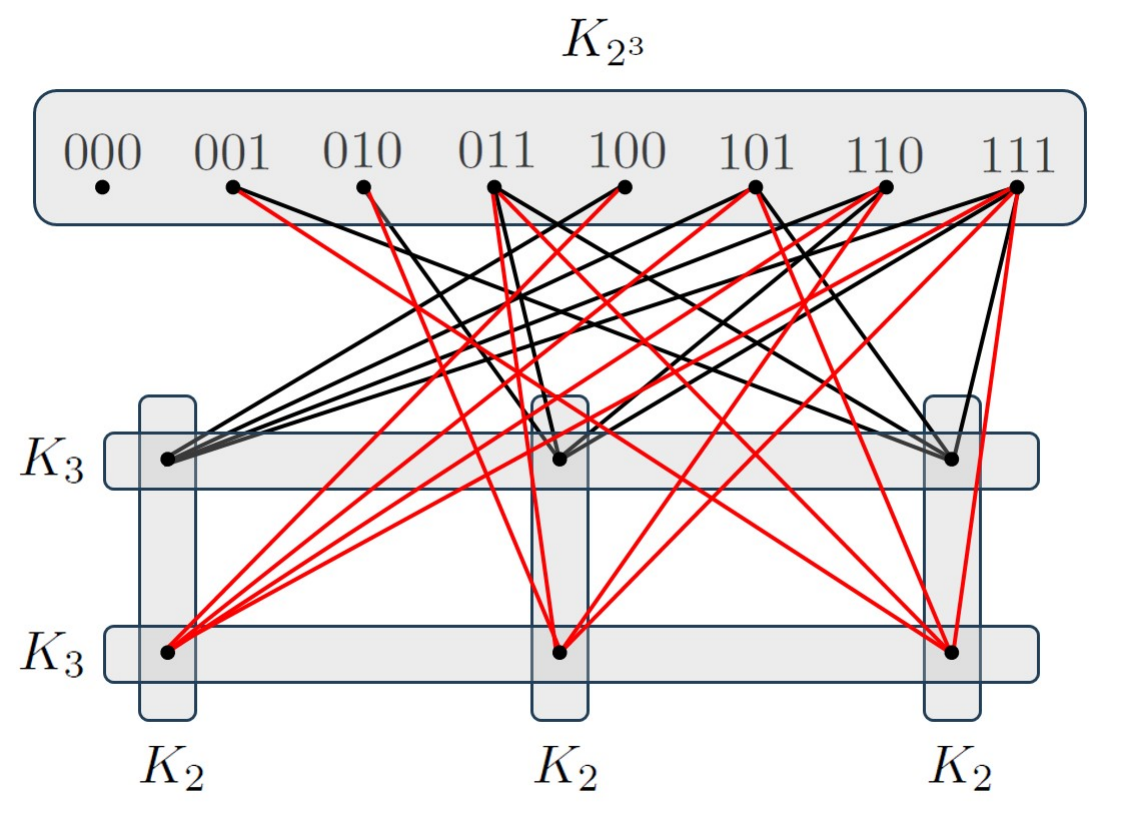}
\end{center}
\caption{The graph $M_{t,k}$ from the proof of Theorem \ref{Tm_Knor2} for
$t=3$ and $k=2$. The graph $M_{t,k}$ consists of a complete graph $K_{2^{t}}$
whose vertices are labeled by binary codes of length $t$ and vertex disjoint
graph $K_{t}\square K_{k}$ with vertices $v_{i,j}$ for $1\leq i\leq t$ and
$1\leq j\leq k$. A vertex $v_{i,j}$ of $K_{t}\square K_{k}$ is connected to
all vertices of $K_{2^{t}}$ which have $1$ on the $i$-th coordinate.}%
\label{Fig02}%
\end{figure}

Similarly as with the bound of Theorem \ref{Tm_spanci}, the upper bound of
Theorem \ref{Tm_boundK} is a theoretical bound and the theorem gives no answer
to the question how tight the bound is. We will show that there indeed exists
a graph $G$ such that the $(k+1)$-metric dimension of $G$ is exponential in
terms of the $k$-metric dimension of $G$. In order to do so, we construct
similar graphs as in the previous section.

Let $t\geq1$ and $k\geq2$ be two integers. The graph $M_{t,k}$ on $n=kt+2^{t}$
vertices, illustrated by Figure \ref{Fig02}, is constructed as follows. Let
$V_{1}$ be a set of $2^{t}$ vertices denoted by sequences of $0$'s and $1$'s
of length $t$. Construct a complete graph on $V_{1}$. Now take the Cartesian
product of $K_{t}\square K_{k}$ with vertices $v_{i,j}$, where $1\leq i\leq t$
and $1\leq j\leq k$, and denote by $V_{2}$ the vertices of this Cartesian
product. Further, join $v_{i,j}$ with exactly those vertices of $V_{1}$ which
have $1$ on $i$-th position, for $1\leq i\leq t$. Finally, the resulting graph
is denoted by $M_{t,k}.$ In the next theorem we prove that the graphs
$M_{t,k}$ have the desired property.

\begin{theorem}
\label{Tm_Knor2}For every $k\geq2$ and every big enough $t,$ it holds that
$\mathrm{dim}_{k}(M_{t,k})=kt$ and $\mathrm{dim}_{k+1}(M_{t,k})\geq2^{t-1}$.
\end{theorem}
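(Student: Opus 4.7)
The plan is to deduce both statements from one structural observation about pairs in $M_{t,k}$, followed by short counting. For any $S\subseteq V(M_{t,k})$ let $a_i=|S\cap\{v_{i,1},\ldots,v_{i,k}\}|$ denote the number of column-$i$ vertices of $V_2$ that lie in $S$. The key observation is that if $u,u'\in V_1$ differ in exactly one coordinate $i$, then (because $V_1$ induces a clique and only the $v_{i,\cdot}$ column detects coordinate $i$) the only vertices of $M_{t,k}$ that distinguish $u$ from $u'$ are $u,u',v_{i,1},\ldots,v_{i,k}$, a total of only $k+2$. Hence for any $k$-resolving set $S$ one needs $|S\cap\{u,u'\}|\geq k-a_i$ for each of the $2^{t-1}$ such pairs (at fixed $i$), which forces $a_i\geq k-2$; moreover $a_i=k-1$ implies $|S\cap V_1|\geq 2^{t-1}$ (one endpoint of each pair must lie in $S$), while $a_i=k-2$ implies $V_1\subseteq S$.

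For the upper bound $\mathrm{dim}_k(M_{t,k})\leq kt$, I would check that $V_2$ itself is a $k$-resolving set by case analysis on the type of pair. Pairs inside $V_1$ differing in coordinate $i$ are distinguished by the whole column $\{v_{i,1},\ldots,v_{i,k}\}$; pairs $(v_{i,j},v_{i,j'})$ and $(v_{i,j},v_{i',j'})$ admit direct distinguisher counts in $V_2$ of $2t$ and $2t+2k-6$, respectively. The only delicate case is a mixed pair $(u,v_{i,j})$ with $u\in V_1$ and $u_i=1$; a direct distance analysis yields $t-k+2+(k-2)w(u)$ distinguishers in $V_2$, where $w(u)$ is the Hamming weight of $u$. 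The worst case is $w(u)=1$, giving $t$, so the count is $\geq k$ provided $t\geq k$---this is exactly what 'big enough $t$' supplies. For the matching lower bound $\mathrm{dim}_k(M_{t,k})\geq kt$ I split on $m=\min_i a_i$: if $m=k$ then $V_2\subseteq S$ and $|S|\geq kt$; if $m=k-1$ then $|S|\geq t(k-1)+2^{t-1}$; if $m=k-2$ then $|S|\geq t(k-2)+2^t$. The last two expressions exceed $kt$ for $t$ sufficiently large, completing Part 1.

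For the $(k+1)$-resolving lower bound $\mathrm{dim}_{k+1}(M_{t,k})\geq 2^{t-1}$, I reapply the key observation with threshold $k+1$: the one-coordinate pair condition becomes $|S\cap\{u,u'\}|\geq k+1-a_i$, which forces $a_i\geq k-1$ for every $i$. Either every $a_i=k$, in which case $V_2\subseteq S$ and the condition still requires one endpoint of each of the $2^{t-1}$ pairs (for every fixed $i$) to lie in $S$, giving $|S\cap V_1|\geq 2^{t-1}$; or some $a_i=k-1$, which forces $V_1\subseteq S$ and hence $|S|\geq 2^t\geq 2^{t-1}$. In either case $|S|\geq 2^{t-1}$, as claimed. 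The principal obstacle is the distance bookkeeping for the mixed pair $(u,v_{i,j})$ in the upper bound of Part 1; everything else is an immediate consequence of the single distinguisher-structure observation about one-coordinate $V_1$-pairs.
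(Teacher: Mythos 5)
Your proof is correct, and it rests on the same structural heart as the paper's: for $u,u'\in V_1$ differing only in coordinate $i$, the full distinguisher pool is $\{u,u'\}\cup\{v_{i,1},\ldots,v_{i,k}\}$ (size $k+2$), and the $2^{t-1}$ such pairs at fixed $i$ are pairwise disjoint. Where you genuinely diverge is in the bookkeeping. You work throughout with the characterization that $S$ is $k$-resolving iff every pair has at least $k$ distinguishers in $S$, which lets you run a single parametrized count (via the column counts $a_i$ and the trichotomy $\min_i a_i\in\{k,k-1,k-2\}$) that delivers both lower bounds at once simply by raising the threshold from $k$ to $k+1$. The paper instead stays with the deletion definition: its Claim~B shows that any $k$-resolving set missing even one vertex of $V_2$ has size at least $2^{t-1}$, by deleting the remaining at most $k-1$ vertices of that vertex's $K_k$-copy and observing that the surviving set must resolve all $2^{t-1}$ coordinate-$i$ pairs from within $V_1$; it then applies this claim twice, once directly and once after removing one vertex of $V_2$ from a $(k+1)$-resolving set. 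For the upper bound the paper argues that after deleting $k-1$ vertices from $V_2$ some copy of $K_t$ survives intact and resolves $V_1$, with a separate adjacency-counting argument for mixed pairs that needs $t>k+3$; your explicit distinguisher counts ($k$ for $V_1$-pairs, $2t$, $2k$ and $2t+2k-6$ for $V_2$-pairs, and $t-k+2+(k-2)w(u)\geq t$ for the mixed pair) achieve the same with $t\geq k$ and are, if anything, sharper. Two small points to make explicit in a final write-up: the (standard, one-line) equivalence between the deletion definition of a $k$-resolving set and the ``at least $k$ distinguishers per pair'' characterization, and the $V_2$-pair subcase $i\neq i'$, $j=j'$, which you skipped but which has $2k\geq k$ distinguishers and so causes no trouble.
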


\begin{proof}
We will prove the theorem through several claims.

\medskip

\noindent\textbf{Claim A.} \emph{The set }$S=V_{2}\subseteq V(M_{t,k})$\emph{
is a }$k$\emph{-resolving set of }$M_{t,k}.\smallskip$

\noindent Let $S^{\prime}$ be a set obtained from $S$ by deleting $k-1$
vertices. It is sufficient to prove that $S^{\prime}$ is a resolving set of
$M_{t,k}$. Let $x,y\in V(M_{t,k})$ be a pair of vertices of $M_{t,k}$, we have
to show that $x$ and $y$ are resolved by $S^{\prime}$.

Assume first that both $x$ and $y$ belong to $V_{1}$. Observe that there are
$k$ copies of $K_{t}$ in the product $K_{t}\square K_{k}$. So in order to
remove from $S$ a vertex from every copy of $K_{t}$, one would need to remove
at least $k$ vertices. Since $S^{\prime}$ is obtained from $S$ by removing
$k-1$ vertices, we conclude that $S^{\prime}$ contains all the vertices of at
least one copy of $K_{t}$. Since vertices of each copy of $K_{t}$ distinguish
all pairs of vertices from $V_{1}$, we conclude that $S^{\prime}$
distinguishes $x$ and $y$.

Assume next that both $x$ and $y$ belong to $V_{2}$. Notice that a pair of
vertices $x,y\in V(M_{t,k})$ is always distinguished by both $x$ and $y$.
Hence, if at least one of the vertices $x$ and $y$ belong to $S^{\prime}$, we
are done. So, let us assume that $x,y\in V_{2}\setminus S^{\prime}$. In order
to remove all vertices of one copy of $K_{k}$ from $S$, one would need to
remove $k$ vertices from it. Since $S^{\prime}$ is obtained from $S$ by
removing $k-1$ vertices, we conclude that $S^{\prime}$ contains at least one
vertex from each copy of $K_{k}$. Similarly, to remove all vertices of one
copy of $K_{t}$ from $S$, one needs to remove $t$ vertices from it. Since $t$
is big enough, we may assume $t>k$, so we conclude that $S^{\prime}$ contains
at least one vertex also from each copy of $K_{t}$.

Now, if $x$ and $y$ belong to two distinct copies of $K_{k}$, recall that
$S^{\prime}$ contains all the vertices of at least one copy of $K_{t}$. We
chose for $s$ the vertex of this copy of $K_{t}$ which belongs to the same
copy of $K_{k}$ as $x$. Since the entire copy of $K_{t}$ belongs to
$S^{\prime}$, it follows that $s\in S^{\prime}$. Recall that we assumed
$x,y\not \in S^{\prime}$, so neither $x$ nor $y$ belong to this copy of
$K_{t}$. Since $x$ and $s$ belong to the same copy of $K_{k}$, it follows that
$x$ and $s$ are connected by an edge in $K_{t}\square K_{k}$. On the other
hand, $y$ and $s$ do not belong to the same copy of $K_{t}$ nor to the same
copy of $K_{k}$ (since $y$ belongs to a distinct copy of $K_{k}$ than $x$).
Hence, $y$ and $s$ are not connected by an edge in $K_{t}\square K_{k}$. We
conclude that $s\in S^{\prime}$ distinguishes $x$ and $y$.

Next, if $x$ and $y$ belong to the same copy of $K_{k}$, then they must belong
to two distinct copies of $K_{t}$. Let $s$ be a vertex of $S^{\prime}$ which
belongs to the same copy of $K_{t}$ as $x$. Since $x\not \in S^{\prime}$, it
follows that $x$ and $s$ are connected by an edge in $K_{t}\times K_{k}$. On
the other hand, since $s$ belongs to the same copy of $K_{t}$ as $x$, and $y$
belongs to distinct copy of $K_{t}$ than $x$, we conclude that $y$ and $s$ do
not belong to the same copy of $K_{t}$. Further, since $x\not \in S^{\prime}$
and $s\in S^{\prime}$, it follows that $x$ and $s$ are two distinct vertices.
So, from the fact that $x$ and $s$ belong to a same copy of $K_{t}$ we infer
that $x$ and $s$ belong to distinct copies of $K_{k}$. Since $y$ belongs to
the same copy of $K_{k}$ as $x$, this implies that $y$ and $s$ belong to two
distinct copies of $K_{k}$ too. We conclude that $y$ and $s$ are not connected
by an edge in $K_{t}\square K_{k}$. Hence, $x$ and $y$ are distinguished by
$s\in S^{\prime}$.

Assume finally that $x$ belongs to $V_{1}$ and $y$ to $V_{2}$. Again, we may
assume that $y\not \in S^{\prime}$, as otherwise $x$ and $y$ would be
distinguished by $y\in S^{\prime}$. Denote by $S^{\prime\prime}$ the set of
all vertices of $S^{\prime}$ which belong to the same copy of $K_{t}$ as $y$.
Notice that $\left\vert S^{\prime\prime}\right\vert \geq t-1-(k-1)=t-k$.
Observe that $y$ is connected by an edge in $K_{t}\square K_{k}$ to every
vertex of $S^{\prime\prime}$. If there exists a vertex $s$ in $S^{\prime
\prime}$ such that $x$ is not adjacent to $s$ in $K_{t}\square K_{k}$, then
$x$ and $y$ are distinguished by $s\in S^{\prime}$. So, let us assume that $x$
is adjacent to every vertex of $S^{\prime\prime}$. This implies that $x$ is
adjacent to at least $\left\vert S^{\prime\prime}\right\vert \cdot
k\geq(t-k)k$ vertices of $V_{2}$, at least $(t-k)k-(k-1)$ of which belong to
$S^{\prime}$. On the other hand, $y$ is adjacent to $(t-1)+(k-1)$ vertices of
$V_{2}$, and hence to at most $(t-1)+(k-1)$ vertices of $S^{\prime}$. Observe
that if $t>k+3$, then
\[
(t-k)k-(k-1)>(t-1)+(k-1).
\]
So if $t>k+3$, then there exists at least one vertex from $S^{\prime}$ which
is adjacent to $x$ and it is not adjacent to $y$. We conclude that $x$ and $y$
are distinguished by $S^{\prime}$ and the claim is established.

\medskip

Claim A implies that $\mathrm{dim}_{k}(M_{t,k})\leq tk$. In order to prove
that $\mathrm{dim}_{k}(M_{t,k})=tk$, we need the following claim.

\medskip

\noindent\textbf{Claim B.}\emph{ If }$S\subseteq V(M_{t,k})$\emph{ is a }%
$k$\emph{-resolving set in }$M_{t,k}$\emph{ and }$\left\vert S\cap
V_{2}\right\vert <tk$,\emph{ then }$\left\vert S\right\vert \geq
2^{t-1}.\smallskip$

\noindent Since $S$ is a $k$-resolving set of $M_{t,k}$, a set $S^{\prime
}=S\setminus\{s_{1},\ldots,s_{k-1}\}$ must be a resolving set in $M_{t,k}$ for
every subset $\{s_{1},\ldots,s_{k-1}\}\subseteq S$ of cardinality $k-1$. Since
$\left\vert S\cap V_{2}\right\vert <tk$, it follows that $S$ does not contain
at least one vertex of $V_{2}$. The vertex of $V_{2}$ not contained in $S$
belongs to a copy of $K_{k}$. So choose $\{s_{1},\ldots,s_{k-1}\}$ to contain
all the remaining vertices of $K_{k}$ in $S$. Then $S^{\prime}=S\setminus
\{s_{1},\ldots,s_{k-1}\}$ does not contain any vertex of this copy of $K_{k}$.
Assume that the copy of $K_{k}$ not contained in $S^{\prime}$ consists of
vertices $v_{i,j}$ for $1\leq j\leq k$. Notice that there are $2^{t-1}$ pairs
of vertices in $V_{1}$ which differ only in the $i$-th coordinate and all
these pairs are pairwise vertex disjoint. As these pairs are distinguished
only by vertices $v_{i,j}$ of $V_{2}$ for $1\leq j\leq k$, it follows that
they must be distinguished by vertices of $S^{\prime}\cap V_{1}$. Recall that
vertices of $V_{1}$ induce a complete subgraph of $M_{t,k}$, so each pair of
vertices in $V_{1}$ is distinguished only by the two vertices which belong to
the pair. Since there are $2^{t-1}$ pairs of $V_{1}$ to distinguish, and all
these pairs are pairwise vertex disjoint, it follows that $\left\vert
S^{\prime}\right\vert \geq2^{t-1}$. From $S^{\prime}\subseteq S$ we conclude
that $\left\vert S\right\vert \geq2^{t-1}$ also holds, and the claim is established.

\medskip

In Claim A we have established that $S=V_{2}$ is a $k$-resolving set of
$M_{t,k}$. Claim B implies that any other $k$-resolving set of $M_{t,k}$
contains at least $2^{t-1}$ vertices. Since $2^{t-1}\geq kt=\left\vert
V_{2}\right\vert $ for big enough $t$, we conclude that $S$ is a smallest
$k$-resolving set of $M_{t,k}$. Hence, $\mathrm{dim}_{k}(M_{t,k})=kt$.

It remains to prove that $\mathrm{dim}_{k+1}(M_{t,k})\geq2^{t-1}$. In order to
establish this, we need to show that every $(k+1)$-resolving set $S$ of $G$
contains at least $2^{t-1}$ vertices. Let $S$ be a $(k+1)$-resolving set of
$M_{t,k}$. This implies that $S$ is also a $k$-resolving set of $G$. If $S$
does not contain a vertex of $V_{2}$, then Claim B implies that $\left\vert
S\right\vert \geq2^{t-1}$. So, let us assume that $S$ contains all vertices of
$V_{2}$. Since $S$ is a $(k+1)$-resolving set, the set $S^{\prime}%
=S\setminus\{s\}$ must be a $k$-resolving set for every $s\in S$. So choose
$s\in S\cap V_{2}$. Then $S^{\prime}=S\setminus\{s\}$ is a $k$-resolving set
which does not contain a vertex of $V_{2}$. By Claim~B we have $\left\vert
S^{\prime}\right\vert \geq2^{t-1}$. Since $S^{\prime}\subseteq S$, it follows
that $\left\vert S\right\vert \geq2^{t-1}$, and we are done.
\end{proof}

\section{Concluding remarks}

In this paper we consider the tolerance of metric basis to faults. A metric
basis $S\subseteq V(G)$ distinguishes all pairs of vertices of $G,$ but it
will not distinguish them any more if one of its vertices is excluded from
$S.$ Since the concept of metric basis is used to model the sensors placed in
a network, and a sensor in a physical world can malfunction, it is of interest
to consider the problem of making metric dimension tolerant to faults. A
natural question is how many more vertices need to be added to a metric basis
in order to make it tolerant to the fault of single sensor. In other words,
the question is how large $\mathrm{ftdim}(G)$ can be in terms of
$\mathrm{dim}(G).$ There are classes of graphs for which $\mathrm{ftdim}(G)$
is linearly bounded by $\mathrm{dim}(G)$ as presented in Propositions
\ref{Prop_trees} and \ref{Prop_multipartite}, but a general known bound (see
Theorem \ref{Tm_spanci}) is
\[
\mathrm{ftdim}(G)\leq\mathrm{dim}(G)(1+2\cdot5^{\mathrm{dim}(G)-1})
\]
which is exponential in terms of $\mathrm{dim}(G).$ In this paper, we provide
a graph $G$, namely $M_{t},$ with
\[
\mathrm{ftdim}(G)\geq\mathrm{dim}(G)+2^{\mathrm{dim}(G)-1},
\]
and this for every possible value of $\mathrm{dim}(G)$ (see Theorem
\ref{Tm_ft}). By this, we show that the bound on $\mathrm{ftdim}(G)$ indeed
has to be exponential in terms of $\mathrm{dim}(G).$

\begin{figure}[h]
\begin{center}
\includegraphics[scale=0.6]{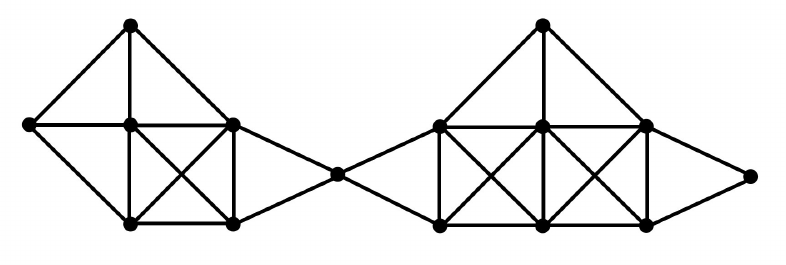}
\end{center}
\caption{A graph $G$ on $n=15$ vertices with $\mathrm{dim}(G)=2$ and
$\mathrm{ftdim}(G)=7.$}%
\label{Fig04}%
\end{figure}

On the other hand, the gap between the upper bound and the value of
$\mathrm{ftdim}(G)$ for the graph we provide is still significant. An
interesting direction for further work is to narrow this gap by either
lowering the upper bound or finding a graph with the value of $\mathrm{ftdim}%
(G)$ higher than for our graph. If we consider graphs with $\mathrm{dim}%
(G)=2,$ it is not difficult to find graphs with $\mathrm{ftdim}(G)=7$ which is
higher than $\mathrm{dim}(G)+2^{\mathrm{dim}(G)-1}=4.$ One such graph is shown
in Figure \ref{Fig04}. For higher values of $\mathrm{dim}(G),$ given the
nature of an exponential function, the gap between $\mathrm{dim}(G)$ and
$\mathrm{ftdim}(G)$ must increase significantly, so it seems more difficult to
find an example of a graph better than $M_{t}.$

For every integer $t\geq1,$ let us define $j(t)$ to be a maximum value of
$\mathrm{ftdim}(G)$ over all graphs $G$ with $\mathrm{dim}(G)=2$ and
$\kappa(G)\geq2,$ where we do not fix the order of the graph $G.$ Since only
paths $P_{n}$ have $\mathrm{dim}(P_{n})=2,$ and this for every $n\geq2,$ it
holds that $j(1)=2.$ For $t=2,$ notice that the upper and the lower bounds on
$\mathrm{ftdim}(G)$ imply $4\leq j(2)\leq22.$ In the light of the graph $G$
from Figure \ref{Fig04}, this can be narrowed to $7\leq j(2)\leq22.$

As for $t\geq3$, among graphs with $\mathrm{dim}(G)=t,$ the graph $M_{t}$ is
the graph with the highest value of $\mathrm{ftdim}(G)$ we know of. Therefore,
the best known bounds for $j(t)$ are provided by Theorems \ref{Tm_spanci} and
\ref{Tm_ft}, i.e.
\begin{equation}
t+2^{t-1}\leq j(t)\leq t(1+2\cdot5^{t-1}). \label{For_jt}%
\end{equation}
Motivated by all this, we propose the following problem.

\begin{problem}
Determine the function $j.$
\end{problem}

\noindent As the above problem can be challenging, one may find tractable the
following subtasks:

\begin{itemize}
\item \textit{find }$j(2)$;

\item \textit{for }$t\geq2$\textit{, narrow the interval of possible values of
}$j(t)$\textit{ given by }(\ref{For_jt}).
\end{itemize}

We also established the similar results for the $k$-metric dimension, which is
a generalization of the fault-tolerant metric dimension. First, we established
the following upper bound on the $(k+1)$-metric dimension
\[
\mathrm{dim}_{k+1}(G)\leq\mathrm{dim}_{k}(G)(1+2\cdot5^{\mathrm{dim}_{k}%
(G)-1})
\]
which is again exponential in terms of $\mathrm{dim}_{k}(G).$ Next, for any
$k\geq2$ and sufficiently large integer $t,$ we provided a graph $G=M_{t,k}$
with $\mathrm{dim}_{k}(G)=kt$ and $\mathrm{dim}_{k+1}(G)\geq2^{t-1},$ which is
a confirmation that the upper bound on $\mathrm{dim}_{k+1}(G)$ indeed has to
be exponential in terms of $\mathrm{dim}_{k}(G).$ Yet again, it would be
interesting to:

\begin{itemize}
\item \textit{narrow the gap between the established upper bound and the value
of }$\mathrm{dim}_{k+1}(M_{t,k})$.
\end{itemize}

\bigskip

\bigskip\noindent\textbf{Acknowledgments.}~~First author acknowledges partial
support by Slovak research grants VEGA 1/0567/22, VEGA 1/0069/23,
APVV--22--0005 and APVV-23-0076. Second author acknowledges partial support by
Project KK.01.1.1.02.0027, a project co-financed by the Croatian Government
and the European Union through the European Regional Development Fund - the
Competitiveness and Cohesion Operational Programme. All authors acknowledge
partial support by the Slovenian Research Agency ARRS program\ P1-0383 and
ARRS project J1-3002.

\end{document}